%
%
\documentclass{amsart}
\usepackage{fullpage}
\usepackage{amsfonts}
\usepackage{latexsym}
\usepackage{amscd}
\usepackage{amsmath}
\usepackage{amssymb}
\usepackage{graphicx}
\usepackage[mathscr]{eucal}
\usepackage{xypic}

\newtheorem{theorem}{Theorem}[section]
\newtheorem{lemma}[theorem]{Lemma}
\newtheorem{corollary}[theorem]{Corollary}

\newtheorem{definition}[theorem]{Definition}
\newtheorem{example}[theorem]{Example}

\theoremstyle{remark}

\numberwithin{equation}{section}




\newcommand{\N}{\mathbb{N}}

\begin{document}

\title[The Ellis semigroup of a nonautonomous discrete dynamical system]{The Ellis semigroup of a nonautonomous discrete dynamical system}

\author{S. Garc\'{\i}a-Ferreira}
\address{Centro de Ciencias Matem\'aticas,   Universidad Nacional
Aut\'onoma de M\'exico, Campus Morelia, Apartado Postal 61-3, Santa Mar\'{\i}a,
58089, Morelia, Michoac\'an, M\'exico}
\email{sgarcia@matmor.unam.mx}

\author{M. Sanchis}
\address{Institut de Matem\`atiques i Aplicacions de
Castell\'o (IMAC), Universitat Jaume I,
 Campus Riu Sec, 12071-Castell\'{o}, Spain }
\email{sanchis@mat.uji.es}

\thanks{Research of the first-named author was supported
by  CONACYT grant no. 81368-F and PAPIIT grant no. IN-101911.
Hospitality and financial support received from the \emph{Department
of Mathematics of Jaume I University (Spain)} where this research
was essentially performed are gratefully acknowledged. The research
of the second-named author was supported by the Spanish Ministry of
Science and Education (Grant number MTM2011-23118), and by Bancaixa
(Projecte P1$\cdot$1B2011-30.}

\subjclass[2010]{Primary 54G20, 54D80, 22A99: secondary 54H11}

\date{}

\dedicatory{}

\keywords{free ultrafilter, discrete dynamical system, nonautonomous discrete dynamical system, Ellis semigroup,
compact metric space, $p$-limit point}

\begin{abstract}  We introduce the  {\it Ellis semigroup} of a nonautonomous discrete dynamical system
$(X,f_{1,\infty})$ when $X$ is a metric compact space. The underlying set of this semigroup  is the pointwise closure of
$\{f\sp{n}_1 \, |\, n\in \mathbb{N}\}$ in the space $X\sp{X}$.
 By using the convergence  of a sequence of points with respect to an ultrafilter it is possible to give a precise description of the semigroup and its operation.
This notion  extends the classical Ellis semigroup of a discrete dynamical system. We show several properties that connect  this semigroup and the  topological properties of the nonautonomous discrete dynamical system.
\end{abstract}

\maketitle

\section{Introduction and preliminaries}

 A general form of a nonautonomous  difference equation is the following:
Given a compact metric space $(X,d)$ and a sequence of continuous function $(f_n: X \to X)_{n \in \N }$,
for each $x  \in X$ we set
$$
 \begin{cases} x_0 = x,  \\  ¿
x_{n+1} = f_n(x_n).
 \end{cases}
$$
These kind of nonautonomous  difference equations have been considered by several mathematicians (see for instance, among others,
\cite{shi}, \cite{zhu}). The most classical examples are when $X= [0,1]$ is the unit interval, and $d$ is the usual euclidean metric. After seeing the definition of a nonautonomous discrete dynamical system, we observe that  the orbit of a point forms a solution of  a nonautonomous  difference equation.

 \medskip

 In this paper, we shall apply the notion of convergence with respect to an ultrafilter on the natural numbers $\N$ to the study of the
 nonautonomous discrete dynamical system. This kind of convergence has been a very powerful tool in the  study of several topological aspects of  a discrete dynamical system. In particular, it is very useful to handle the elements  of the Ellis semigroup  as we can seen in the papers  \cite{Bla},  \cite{gf11}, \cite{gs} and \cite{gs2}.  The aim of this paper is twofold. First we introduce and study the Ellis semigroup of a nonautonomous discrete dynamical system. Second, we point out how this convergent theory  can be used in the context of nonautonomous dynamical systems. So far as we know, it is the first time that this kind of convergence has been applied in the framework of nonautonomous discrete dynamical systems.

\medskip

Our  topological spaces will be compact and metric.
Usually, $d$ will denote the metric of a metric space $X$ and $B(x,\epsilon)$ the ball with center $x$ and radius $\epsilon > 0$.
The set of neighborhoods of a point $x \in X$ will be denoted by $\mathcal{N}(x)$.  If $f: X \to Y$ is a continuous function between Tychonoff spaces, then
$\overline{f}: \beta(X) \to \beta(Y)$ will stand for the Stone
extension of $f$. A subbasic open subset of a power space $X^I$ is denoted by $[x,U] = \{ f \in X^I : f(i) \in V \}$ where $i \in I$ and $U$ is a nonempty open subset of $X$. The Stone-\v Cech compactification
$\beta(\mathbb{N})$ of the natural numbers $\mathbb{N}$ with the
discrete topology will be identified with the set of all
ultrafilters on $\mathbb{N}$, and its remainder $\mathbb{N}^* =
\beta(\mathbb{N}) \setminus \mathbb{N}$ with the set of all free
ultrafilters on $\mathbb{N}$. Notice  that each $n \in \N$ can be identified
with the fixed ultrafilter $\{ A \subseteq \N : n \in A \}$. If $A \subseteq \mathbb{N}$, then
$\hat{A} = cl_{\beta(\mathbb{N})}A = \{ p \in \beta(\mathbb{N}) : A
\in p \}$ is a basic clopen subset of $\beta(\mathbb{N})$, and $A^*
= \hat{A} \setminus A = \{ p \in \mathbb{N}^* : A \in p \}$ is a
basic clopen subset of $\mathbb{N}^*$.

\medskip

Recall that  a {\it discrete dynamical system} is a pair $(X,f)$ where $X$ is
a compact metric space and $f\colon X\to X$ is a continuous function.
A {\it nonautonomous discrete dynamical system} is  a pair $(X,f_{1,\infty})$  where
$X$ is a compact metric space and  $f_{1,\infty}$ is a sequence of continuous functions
$(f_n: X \to X)_{n \in \N }$. The nonautonomous discrete dynamical systems were introduced by  S.  Kolyada and L. Snoha in the article \cite{ks}. The  paper \cite{bali} describes some recent developments on the theory of nonautonomous discrete dynamical systems (see also \cite{ca}, \cite{ca11}, \cite{dth}, \cite{dth15}, \cite{cl} and \cite{shi}).
Given  $n \in \N$, the $n$-{\it iterate} of a nonautonomous discrete dynamical system  $(X,f_{1,\infty})$  is the composition
 $$
 f_1^n := f_n \circ f_{n-1}  \circ....... \circ f_2 \circ f_1.
 $$
The symbol $f^0_1$ will stand for the identity map.
A  discrete dyna\-mi\-cal system $(X,f)$ coincides with  the nonautonomous discrete dynamical system
$(X,f_{1,\infty})$ where $f_n = f$ for each $n \in \N$. If $(X,f_{1,\infty})$ is
a nonautonomous discrete dynamical system, then the {\it orbit} of a point $x \in X$ is the
set
$$
\mathcal{O}_{f_{1,\infty}}(x) := \{x, f_1^1(x), f_1^2(x),....., f_1^n(x),.....\}.
$$

\medskip

The {\it Ellis semigroup} of a discrete dynamical system $(X,f)$, denoted by $E(X,f)$,  is the closure of $\{ f^n : n \in \N \}$ inside of  the compact space
$X^X$. It well-known that $E(X,f)$  is a compact semigroup whose operation is the composition of functions (see, for instance,  \cite{gs}, \cite{gs2} and \cite{glasner}).  The Ellis semigroup of a discrete dynamical system was introduced by R. Ellis in \cite{ell} and has played a very important role in topological
dynamics.  The article \cite{glasner} offers an excellent survey concerning applications of the Ellis semigroup.

\medskip

The second section is devoted to define and describe  the Ellis semigroup for a nonautonomous discrete dynamical system.
Several basic properties of the Ellis semigroup are proved in this section.
In the third section, we study the Ellis semigroup of  those nonautonomous discrete dynamical systems $(X,f_{1,\infty})$ for which
the sequence $(f_n)_{n \in \N}$ converges uniformly to a function $\phi: X \to X$.

\section{The Ellis semigroup}

We start with the description  of the underlying space of the Ellis semigroup of a nonautonomous discrete dynamical system.

\begin{definition} The {\it Ellis semigroup} of a nonautonomous discrete dynamical system
$(X,f_{1,\infty})$ is the pointwise closure of the set
$\{f_1\sp{n}\, |\, n\in \mathbb{N}\}$ inside of the compact space $X\sp{X}$.
This compact space will be denoted by $E(X,f_{1,\infty})$.
\end{definition}

Our next task is the definition of the  semigroup operation of $E(X,f_{1,\infty})$ which justifies the name Ellis semigroup and we shall also explain why  this operation extends the original operation for the case of a discrete dynamical system.  To have this done  we shall follow some ideas from  the papers  \cite{gs} and \cite{gs2}. One of such ideas  is the use of   the $p$-limit point of a sequence of points, where $p \in  \N^*$:

\medskip

Let $X$ be a space and  $p \in \mathbb{N}^*$. A point $x \in X$ is
said to be the  $p$-{\it limit point} of a sequence $(x_n)_{n \in
\mathbb{N}}$ in $X$ (in symbols, $x = p-\lim_{n \to \infty}x_n$) if for
every neighborhood $V$ of $x$, $\{ n \in \mathbb{N} : x_n \in V \}
\in p$.  The notion of $p$-limit point was
introduced by several mathematicians in distinct contexts, for instance we can mention R.
A. Bernstein \cite{Be} and H. Furstenberg \cite[p. 179]{fu}. We remark that each
sequence of a compact space always has a $p$-limit point for every
$p \in \mathbb{N}^*$. Indeed, if $f: \mathbb{N} \to X$ is an arbitrary
function and $X$ is compact and Hausdorff, then  $\overline{f}(p)$  is precisely the $p$-limit point of the sequence
$(f(n))_{n \in \mathbb{N}}$ (this fact is explained in \cite{gon}).  Thus, the $p$-limit point of a sequence in
a compact space not only exists but is unique. Besides, the $p$-limit points are preserved under continuous functions.
We also remark that a point $x \in X$ is an accumulation point of
a countable set $\{ x_n :\, n \in \mathbb{N} \}$ of $X$ iff there is $p \in
\mathbb{N}^*$ such that $x = p - \lim_{n \to \infty} x_n$.

\begin{definition}\label{2.1}
Let $(X,f_{1,\infty})$ be a nonautonomous discrete dynamical system. For each $p \in\N^*$, we define
the function $f_1^p: X \to X$ by
$$
f_1^p(x)= p-\lim_{n \to \infty}f_1^n(x),
$$
for every $x \in X$.  This function $f_1^p$ is called the $p$-iterate of the  nonautonomous discrete dynamical system $(X,f_{1,\infty})$, for each $p \in \N^*$.
For a discrete dynamical system $E(X,f)$, we simply write $f^p$ for each $p \in \beta(\N)$.
\end{definition}

Notice that all iterates  $f_1^n$'s are always continuous. However, the functions $f_1^p$'s
could be all discontinuous: For instance,
let $X = [0,1]$ and  consider any sequence of continuous functions $(f_n: X \to X)_{n \in \N}$  such that $f_n(x) \to 0$ for each $x \in (0,1)$,
$f_n(0) = 0$ and $f_n(1) = 1$ and $f_{n+1} < f_{n}$ for each $n \in \N$.
If $p \in \mathbb{N}^*$, then we have that $f^p(x) = 0$ for every $x < 1$, and
$f^p(1) = 1$. It then follows that $f^p$ is discontinuous at $1$,
for all $p \in \mathbb{N}^*$.
In Theorem \ref{itecon}, we shall give some condition in order that the function $f_1^p$ be continuous, for every $p \in \N^*$.

\medskip

The following theorem shows that the elements of the Ellis semigroup of a nonautonomous discrete dynamical system are precisely the
$p$-iterations of the sequence $f_{1,\infty}$. The proof of this result is the same as the proof of  Theorem 2.2 from \cite{gs2}, but we would like
to include it to help the reader be more familiar with the $p$-limit points.

\begin{theorem}\label{2.2} For every nonautonomous discrete dynamical system $(X,f_{1,\infty})$, we have that
$$
E(X,f_{1,\infty}) = \{ f_1^p : p \in \beta(\mathbb{N})\}
$$
and $f_1^p = p-\lim_{n \to \infty}f_1^n$, for all $p \in \mathbb{N}^*$, in the pointwise topology.
\end{theorem}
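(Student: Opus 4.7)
The plan is to establish the two statements together by proving the double inclusion and identifying $f_1^p$ with the pointwise $p$-limit of the iterates.

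First, I would verify that $f_1^p \in E(X,f_{1,\infty})$ for every $p \in \beta(\mathbb{N})$. For $p \in \mathbb{N}$ this is immediate. For $p \in \mathbb{N}^*$, I would show that $f_1^p$ lies in the pointwise closure of $\{f_1^n : n \in \mathbb{N}\}$ by directly checking the definition against a basic open neighborhood $V = \bigcap_{i=1}^{k}[x_i,U_i]$ of $f_1^p$ in $X^X$. Membership of $f_1^p$ in $V$ means $f_1^p(x_i) \in U_i$ for each $i$, so by definition of $p$-limit point each set $A_i = \{n : f_1^n(x_i) \in U_i\}$ belongs to $p$. Since $p$ is a filter, $\bigcap_{i=1}^{k} A_i \in p$, and in particular this intersection is nonempty (indeed infinite), producing an $n$ with $f_1^n \in V$. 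Hence $f_1^p$ lies in the pointwise closure.

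For the reverse inclusion, I would use the Stone extension. Define $\phi : \mathbb{N} \to X^X$ by $\phi(n) = f_1^n$; because $X^X$ is a compact Hausdorff space, $\phi$ has a continuous extension $\overline{\phi} : \beta(\mathbb{N}) \to X^X$ whose image coincides with the pointwise closure of $\{f_1^n : n \in \mathbb{N}\}$, that is, with $E(X,f_{1,\infty})$. Hence every $g \in E(X,f_{1,\infty})$ equals $\overline{\phi}(p)$ for some $p \in \beta(\mathbb{N})$, and by the remark recalled in the excerpt, $\overline{\phi}(p) = p\text{-}\lim_{n\to\infty} f_1^n$ in the pointwise topology on $X^X$.

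To close the argument, I would identify this pointwise $p$-limit with $f_1^p$. Let $g = p\text{-}\lim_{n\to\infty} f_1^n$ in $X^X$, and fix $x \in X$. The evaluation map $\mathrm{ev}_x : X^X \to X$, $h \mapsto h(x)$, is continuous, and $p$-limits are preserved under continuous functions, so $g(x) = \mathrm{ev}_x(g) = p\text{-}\lim_{n\to\infty} \mathrm{ev}_x(f_1^n) = p\text{-}\lim_{n\to\infty} f_1^n(x) = f_1^p(x)$. Thus $g = f_1^p$, giving both the equality $E(X,f_{1,\infty}) = \{f_1^p : p \in \beta(\mathbb{N})\}$ and the pointwise convergence statement simultaneously.

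The main obstacle is really only conceptual: keeping straight the distinction between the $p$-limit in $X^X$ (global, in the pointwise topology) and the pointwise $p$-limits in $X$ (coordinatewise). The reconciliation comes cleanly from continuity of the coordinate projections combined with preservation of $p$-limits under continuous maps, so no serious technical difficulty arises.
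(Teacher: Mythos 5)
Your proof is correct and follows essentially the same route as the paper: the Stone extension $\overline{\psi}$ of $n \mapsto f_1^n$ has compact (hence closed) image containing the dense set $\{f_1^n : n \in \N\}$, so its image is exactly the pointwise closure, and $\overline{\psi}(p)$ is identified with $f_1^p$. Your first paragraph (the direct filter-intersection check that $f_1^p$ lies in the closure) is redundant given the Stone-extension argument, but your third paragraph usefully makes explicit a step the paper leaves implicit, namely that the $p$-limit in $X^X$ agrees coordinatewise with the $p$-limits in $X$ via continuity of the evaluation maps.
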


\begin{proof} The function $\psi: \mathbb{N} \to X^X$ given by
$\psi(n) = f_1^n$, for all $n \in \mathbb{N}$, is evidently continuous.
Let us consider its Stone extension $\overline{\psi}: \beta(\mathbb{N}) \to
X^X$. We know that
$$
\overline{\psi}(p) = p-lim_{n \to \infty} \psi(n) = p-lim_{n \to \infty} f_1^n = f_1^p,
$$
for all $p \in \mathbb{N}^*$. Since $\overline{\psi}[\mathbb{N}] = \psi[\mathbb{N}]$ is dense in the compact space $E(X,f_{1,\infty})$, we
obtain that $\overline{\psi}[\beta(\mathbb{N})] = E(X,f_{1,\infty})$.
\end{proof}

By the continuity of the function $\overline{\psi}:  \beta(\mathbb{N}) \to X^X$ of the previous proof, the compact space $E(X,f_{1,\infty})$ is a quotient of the compact space $\beta(\mathbb{N})$.

\medskip

Next, we shall describe the semigroup operation of the Ellis semigroup.

\medskip

Let $(X,f_{1,\infty})$ be a nonautonomous discrete
dynamical system. If $n,m \in \N$, then we  define $f_1^n \ast f_1^m := f_1^{n+m}$.
To  establish the semigroups operation for the general case we need  to
 extend  the ordinary addition on the set of
natural numbers to the whole $\beta(\mathbb{N})$ as follows:

\smallskip

For $p \in \beta(\mathbb{N})$ and $n \in \mathbb{N}$, we define $p + n := p-\lim_{m \to \infty} (m + n)$ and if $p, q \in
\beta(\mathbb{N})$, then we define $p + q :=  q - \lim_{n \to \infty}
p + n$. In terms of functions, for each $n \in \N$, we define the $n$-{\it shift}  $\lambda_n: \N \to \N$ by $\lambda_n(m) = m+n$ for each $m \in \N$. Notice
 that $\overline{\lambda_n}(p) = p+n$, for every $n \in \N$ and for every $p \in \N^*$. Moreover, $\overline{\lambda_n}$ induces  a homeomorphism from $\N^*$ to itself.  Hence, it then follows that for each $p \in \N^*$ and each $n \in \N$ there is $p-n \in \N^*$ such that  $\overline{{\lambda_n}}(p - n) = p$.
It is well known that $(\beta(\N),+)$ is a left topological semigroup (for a proof see \cite{hs}); that is, the function $q \to p+q: \beta(\N) \to \beta(\N)$ is continuous for all $p \in \beta(\N)$.

\medskip

We are ready to define the semigroup operation of $E(X,f_{1,\infty})$ for the rest of its elements. Indeed, if $p, q \in \beta(\mathbb{N})$, then we define
$$
f_1^p \ast f_1^q := f_1^{q+p}.
$$
Since the operation on $\beta(\mathbb{N})$ is associative, then  $E(X,f_{1,\infty})$ with the operation $\ast$ is a semigroup. Notice that, for a discrete dynamical system $(X,f)$, we get the Ellis semigroup $E(X,f)$  (see \cite{gs2}).
We remark that this operation of $E(X,f_{1,\infty})$ does not coincide with the composition of functions as in the Ellis semigroup of a discrete dynamical system.
For instance, let $X = [0,1]$ and let $f_n: X \to X$ be defined by $f_n(x) = \frac{1}{n+1}$, for each $x \in X$ and for each $n \in \N$. Then $f_1 \ast f_1 = f^2_n = f_2 \circ f_1 = f_2$ and $f_1 \circ f_1 = f_1$.
In what follows, we shall  also consider the subsemigroup $E(X,f_{1,\infty})^* = \{ f_1^p : p \in \N^* \}$  of the Ellis semigroup  $E(X,f_{1,\infty})$.

\medskip

Now, we establish a nice  connection between the Ellis semigroups of a nonautonomous discrete dynamical system  $(X,f_{1,\infty})$ and the Ellis semigroup of the system $(X,f_{k,\infty})$, for each positive $k \in \N$, where $f_{k,\infty}$ is the sequence of continuous functions
$(f_{n+k}: X \to X)_{n \in \N }$. First, we need to prove two lemmas.

\begin{lemma}\label{fun} Let $\sigma: \N \to \N$ be an injective function and $(x_n)_{n \in \N}$ a sequence in a compact space $X$. If $q \in \N^*$ and  $\overline{\sigma}(q) = p$, then
$$
p-\lim_{n \to \infty}x_n =  q-\lim_{l \to \infty}x_{\sigma(l)}.
$$
In particular, we have that  if $p \in \N^*$ and $k \in \N \setminus \{0\}$, then
$$
p-\lim_{n \to \infty}x_n = (p-k)-\lim_{i \to \infty}x_{k+i}.
$$
\end{lemma}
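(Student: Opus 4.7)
The plan is to derive the identity from the functoriality of Stone extensions, combined with the characterization (already recalled in the paper) that for any function $h\colon \N\to X$ into a compact Hausdorff space and any $r\in\beta(\N)$, the value $\overline{h}(r)$ coincides with the $r$-limit of the sequence $(h(n))_{n\in\N}$.

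First I would introduce the auxiliary map $f\colon \N\to X$ defined by $f(n)=x_n$. Since $X$ is compact and Hausdorff, the Stone extension $\overline{f}\colon \beta(\N)\to X$ is well-defined and continuous, and by the stated characterization one has $\overline{f}(p)=p\text{-}\lim_{n\to\infty}x_n$. Similarly, setting $g:=f\circ\sigma\colon\N\to X$, i.e.\ $g(l)=x_{\sigma(l)}$, one has $\overline{g}(q)=q\text{-}\lim_{l\to\infty}x_{\sigma(l)}$.

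The key step is the identity $\overline{f\circ\sigma}=\overline{f}\circ\overline{\sigma}$, which is the standard functoriality of the Stone--\v{C}ech compactification: both sides are continuous maps $\beta(\N)\to X$ agreeing on the dense subset $\N$, so they agree on $\beta(\N)$ by Hausdorffness of $X$. (Injectivity of $\sigma$ is not actually needed for this equality; it matters only to guarantee that $\overline{\sigma}$ sends $\N^{*}$ into $\N^{*}$, so that the hypothesis $\overline{\sigma}(q)=p$ is consistent with $p,q\in\N^{*}$.) Plugging in $q$ and using the hypothesis $\overline{\sigma}(q)=p$ gives
$$
q\text{-}\lim_{l\to\infty}x_{\sigma(l)}=\overline{g}(q)=\overline{f}(\overline{\sigma}(q))=\overline{f}(p)=p\text{-}\lim_{n\to\infty}x_n,
$$
which is the desired equality.

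For the ``in particular'' clause I would take $\sigma=\lambda_{k}$, the $k$-shift $\lambda_{k}(i)=k+i$, which is injective. The excerpt has already observed that $\overline{\lambda_{k}}$ is a homeomorphism of $\N^{*}$ onto itself and that, by definition, $\overline{\lambda_{k}}(p-k)=p$. Applying the first part with $q=p-k$ yields $p\text{-}\lim_{n\to\infty}x_n=(p-k)\text{-}\lim_{i\to\infty}x_{k+i}$. I do not expect any real obstacle here; the only point requiring care is to cite the functoriality $\overline{f\circ\sigma}=\overline{f}\circ\overline{\sigma}$ (equivalently, that $\overline{\sigma}(q)$ is exactly the ultrafilter $\{A\subseteq\N:\sigma^{-1}(A)\in q\}$, so that the push-forward description of the $p$-limit is automatic), and the rest is bookkeeping.
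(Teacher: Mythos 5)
Your argument is correct and is in substance the same as the paper's: the paper's one-line proof simply applies the identity $A\in\overline{\sigma}(q)\Leftrightarrow\sigma^{-1}(A)\in q$ to the sets $\{n\in\N : x_n\in V\}$ for neighborhoods $V$ of the $p$-limit, which is exactly the push-forward description you point out is equivalent to the functoriality $\overline{f\circ\sigma}=\overline{f}\circ\overline{\sigma}$, and the ``in particular'' clause is handled identically via the shift $\lambda_k$. Your side remark that injectivity of $\sigma$ is not needed for the displayed equality itself (only to keep $\overline{\sigma}(q)$ free) is also accurate.
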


\begin{proof} Put $x = p-\lim_{n \to \infty}x_n$.  For every $V \in \mathcal{N}(x)$,  we know that
$$
 \{ n \in \N : x_n \in V\} \in p \Leftrightarrow \{ l \in \N : x_{\sigma(l)} \in V \} \in q.
$$
Hence, we obtain that $x = q-\lim_{l \to \infty}x_{\sigma(l)}$.
\end{proof}

\begin{lemma}\label{compo} Let $(X,f_{1,\infty})$ be a nonautonomous discrete dynamical systems. Then, for every $p \in \N^*$ and for every  $k \in \N$ with $k \geq 2$, we have that
$$
f_1^p = f_k^{p-k} \circ f^{k-1}_1.
$$
\end{lemma}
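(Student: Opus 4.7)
The plan is to reduce the identity to a direct application of Lemma \ref{fun} together with the definition of the $p$-iterate. The core observation is that for every $n \geq k$ the $n$-iterate of the full system splits cleanly into an ``initial block'' involving only $f_1, \ldots, f_{k-1}$ and a ``tail block'' involving only $f_k, f_{k+1}, \ldots$:
\[
f_1^n \;=\; \bigl(f_n \circ f_{n-1} \circ \cdots \circ f_k\bigr) \circ \bigl(f_{k-1} \circ \cdots \circ f_1\bigr) \;=\; (\text{tail piece})\circ f_1^{k-1}.
\]
By the very definition of the shifted system $(X,f_{k,\infty})$, the tail piece is an iterate of $(X,f_{k,\infty})$. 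Since $\{n \in \N : n \geq k\}$ is cofinite it belongs to every $p \in \N^*$, so this pointwise decomposition is valid ``$p$-almost everywhere'' and can be used freely inside any $p$-limit.

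Next I would substitute the decomposition into the formula $f_1^p(x) = p\text{-}\lim_{n \to \infty} f_1^n(x)$. Because $f_1^{k-1}(x)$ does not depend on $n$, it can be evaluated first, so one is left with a $p$-limit, in $n$, of the tail pieces applied at the fixed point $y := f_1^{k-1}(x)$. Applying Lemma \ref{fun} to the injection $\sigma(i) = k+i$ (which satisfies $\overline{\sigma}(p-k) = p$) converts this $p$-limit in $n$ into a $(p-k)$-limit in $i$, written directly as a limit of iterates of the shifted system at $y$. By Definition \ref{2.1} applied to $(X,f_{k,\infty})$, this $(p-k)$-limit is precisely $f_k^{p-k}(y) = f_k^{p-k}\bigl(f_1^{k-1}(x)\bigr)$, which is the desired right-hand side. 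Since $x$ was arbitrary, the identity of functions follows.

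The only thing requiring genuine care is the index bookkeeping in the last step: one must match the convention used for indexing iterates of the shifted system $(X,f_{k,\infty})$ with the shift $\sigma(i) = k+i$ appearing in Lemma \ref{fun}, so that the ultrafilter appearing on the right comes out as $p-k$. Once the conventions are pinned down, the argument is essentially a direct substitution; no idea beyond Lemma \ref{fun} and the definition of the $p$-iterate is required.
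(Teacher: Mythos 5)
Your argument is correct and is essentially the paper's own proof: both decompose $f_1^n$ (for $n\geq k$) as a tail block in $f_k,\dots,f_n$ composed with the fixed head $f_1^{k-1}$, pull the head out of the $p$-limit, and invoke Lemma \ref{fun} with the shift $\sigma(i)=k+i$ to identify the remaining limit as $f_k^{p-k}$ evaluated at $f_1^{k-1}(x)$. The index bookkeeping you flag is the same (slightly delicate) point in the paper's computation, so nothing further is needed.
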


\begin{proof}  Let $p \in \N^*$ and $k \in \N \setminus \{0, 1\}$. Then, by definition and Lemma \ref{fun}, we obtain that
$$
f_1^p(x) = p-\lim_{n \to \infty}f_1^n(x) =  p-\lim_{n \to \infty}  f_n \circ f_{n-1}  \circ....... \circ f_2 \circ f_1(x)
$$
$$
= (p-k) -\lim_{i \to \infty}  f_{k+i} \circ..... \circ f_{k} \circ f_{k-1}  \circ....... \circ f_2 \circ f_1(x)
$$
$$
= (p-k) -\lim_{i \to \infty}  f_{k+i} \circ..... \circ f_{k}\big(f_{k-1}  \circ....... \circ f_2 \circ f_1(x)\big)
$$
$$
= f_k^{p-k}\big(f_{k-1}  \circ....... \circ f_2 \circ f_1(x)\big),
$$
for all $x \in X$.
This shows that  $f_1^p = f_k^{p-k} \circ f^{k-1}_1$.
\end{proof}

\begin{theorem}\label{sur} Let $(X,f_{1,\infty})$ be a nonautonomous discrete dynamical systems. Then, the space
$E(X,f_{1,\infty})^*$ is a continuous image of $E(X,f_{k,\infty})^*$ for each  $k \in \N$ with $k \geq 2$.
\end{theorem}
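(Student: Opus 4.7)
The plan is to construct an explicit continuous surjection
$\Phi\colon E(X,f_{k,\infty})^* \to E(X,f_{1,\infty})^*$
by right-composition with the fixed continuous map $f_1^{k-1}$, namely
$$
\Phi(g) := g \circ f_1^{k-1}.
$$
The whole argument is then a direct application of Lemma \ref{compo} together with the fact, recorded just before Definition \ref{2.1}, that $\overline{\lambda_k}$ is a homeomorphism of $\N^*$ onto itself.

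First I would verify that $\Phi$ actually lands in $E(X,f_{1,\infty})^*$. By Theorem \ref{2.2} applied to the shifted system $(X,f_{k,\infty})$, every element of $E(X,f_{k,\infty})^*$ has the form $f_k^{q}$ for some $q \in \N^*$. Setting $p := \overline{\lambda_k}(q) \in \N^*$ (so that $p - k = q$), Lemma \ref{compo} yields
$$
\Phi(f_k^q) \;=\; f_k^{p-k} \circ f_1^{k-1} \;=\; f_1^p \;\in\; E(X,f_{1,\infty})^*.
$$

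Surjectivity is then essentially the same computation read backwards: given any $f_1^p \in E(X,f_{1,\infty})^*$ with $p \in \N^*$, the element $q := p - k \in \N^*$ is well-defined because $\overline{\lambda_k}$ is a homeomorphism of $\N^*$, and Lemma \ref{compo} gives $\Phi(f_k^q) = f_1^p$. For continuity, I would simply unwind the pointwise topology: if $g_\alpha \to g$ in $E(X,f_{k,\infty})^*$, then for each $x \in X$ we have $\Phi(g_\alpha)(x) = g_\alpha\bigl(f_1^{k-1}(x)\bigr) \to g\bigl(f_1^{k-1}(x)\bigr) = \Phi(g)(x)$, so $\Phi(g_\alpha) \to \Phi(g)$ pointwise.

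There is no real obstacle here; the only point requiring care is the bookkeeping with the formal "subtraction" $p - k$ in $\N^*$, which must be parsed through $\overline{\lambda_k}$ being a self-homeomorphism of $\N^*$ so that both the surjectivity argument and the identification $p - k = q$ in the well-definedness step are legitimate. Everything else reduces to invoking Lemma \ref{compo} and the continuity of right-composition in the product topology.
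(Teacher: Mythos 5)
Your proof is correct and takes essentially the same route as the paper's: the map is right-composition with $f_1^{k-1}$, well-definedness and surjectivity both come from Lemma \ref{compo} together with the fact that $\overline{\lambda_k}$ restricts to a self-homeomorphism of $\N^*$, and continuity follows from the continuity of right-composition in the pointwise topology. No substantive difference from the paper's argument.
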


\begin{proof}
Let $(X,f_{1,\infty})$ be a nonautonomous discrete dynamical systems and fix $k \in \N \setminus \{0, 1\}$. By Lemma \ref{compo},
 we obtain that
$$
f_k^{p} \circ f^{k-1}_1 = f_k^{(p+k)-k} \circ f^{k-1}_1 = f_1^{p+k},
$$
for every $p \in \N^*$. Since the composition on the right of the set $X^X$ is continuous with the pointwise topology, it follows that the function
$$
f_k^p \to f_k^{p} \circ f^{k-1}_1 : E(X,f_{k,\infty})^* \to E(X,f_{1,\infty})^*
$$
is  continuous. Moreover, since the function $n \mapsto n+k: \N \to \N$ induces a bijection of $\N^*$, given  $q \in \N^*$ there is $p \in \N^*$ such that $q = p+k$. Hence,
the function $f_k^p \to f_k^{p} \circ f^{k-1}_1$ is a surjection.
\end{proof}

Theorem \ref{sur} gives us a sequence of continuous surjections:
$$
...... \to E(X,f_{k,\infty})^* \to ..... \to E(X,f_{2,\infty})^* \to  E(X,f_{1,\infty})^*,
$$
for every nonautonomous discrete dynamical system $(X,f_{1,\infty})$.

\medskip

 To state some properties of the Ellis semigroup of a  periodic nonautonomous discrete dynamical system we need the next notation.

\smallskip

For each $1 < k \in \N$ and $i < k$, we let $C_i = \{ n \in \N : n \equiv  i \ \text{mod}(k) \  \}$.

\begin{theorem}\label{periodic} Let  $(X,f_{1,\infty})$  be a nonautonomous discrete dynamical systems such that
$$
f_{1,\infty} = (f_1,f_2,....,f_k,f_1,f_2,....,f_k,f_1,f_2,....,f_k,....)
$$
where $1 < k \in \N$ and $f_i: X \to X$ is a continuous functions for each $1 \leq i \leq k$. If $p \in C_j^*$ for some $ j < k$, then
$f_1^p \in f_j \circ .... \circ f_2 \circ f_1  \circ E(X,f_{k} \circ ......\circ f_2  \circ f_1)$.
\end{theorem}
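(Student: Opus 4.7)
The plan is to reduce the $p$-limit defining $f_1^p$ to a $q$-limit along the arithmetic progression $C_j$, and to recognize the resulting limit as a composition of the fixed prefix $f_j \circ \cdots \circ f_1$ with an element of the Ellis semigroup of the autonomous system driven by $g := f_k \circ \cdots \circ f_2 \circ f_1$.

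First I would establish the periodicity identity: writing $n \in C_j$ as $n = mk + j$ with $0 < j < k$ (the case $j=0$ handled by the empty composition convention), the block $f_{mk+j} \circ \cdots \circ f_{mk+1}$ equals $f_j \circ \cdots \circ f_1$ by the periodicity of the sequence, while $f_{mk} \circ \cdots \circ f_1$ telescopes into $g^m$. Therefore
\begin{equation*}
f_1^{mk+j} = (f_j \circ \cdots \circ f_2 \circ f_1) \circ g^m \quad \text{for every } m \in \N.
\end{equation*}

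Next, define $\sigma : \N \to \N$ by $\sigma(m) = mk+j$. This is an injection with image $C_j$, so its Stone extension $\overline{\sigma} : \beta(\N) \to \beta(\N)$ is a homeomorphism onto $\widehat{C_j}$ and hence restricts to a bijection $\N^* \to C_j^*$. Given $p \in C_j^*$, pick $q \in \N^*$ with $\overline{\sigma}(q) = p$. By Lemma \ref{fun}, for every $x \in X$,
\begin{equation*}
f_1^p(x) \;=\; p\text{-}\lim_{n \to \infty} f_1^n(x) \;=\; q\text{-}\lim_{m \to \infty} f_1^{\sigma(m)}(x) \;=\; q\text{-}\lim_{m \to \infty} \bigl((f_j \circ \cdots \circ f_1) \circ g^m\bigr)(x).
\end{equation*}
Because $f_j \circ \cdots \circ f_1$ is continuous, $p$-limits commute with it, and the displayed expression equals $(f_j \circ \cdots \circ f_1)\bigl(q\text{-}\lim_{m \to \infty} g^m(x)\bigr) = (f_j \circ \cdots \circ f_1) \circ g^q\,(x).$

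Finally, by Theorem \ref{2.2} applied to the autonomous discrete dynamical system $(X,g)$, we have $g^q \in E(X,g) = E(X, f_k \circ \cdots \circ f_2 \circ f_1)$. This yields $f_1^p = (f_j \circ \cdots \circ f_2 \circ f_1) \circ g^q \in f_j \circ \cdots \circ f_2 \circ f_1 \circ E(X, f_k \circ \cdots \circ f_2 \circ f_1)$, as desired.

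The only real obstacle is the bookkeeping in the first step — making sure the periodicity is applied correctly so that the prefix $f_j \circ \cdots \circ f_1$ separates cleanly from the $g^m$ factor; once that identity is in hand, steps two and three are essentially an application of Lemma \ref{fun} together with continuity of a fixed continuous map, which commutes with $p$-limits in compact spaces.
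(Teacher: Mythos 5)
Your proposal is correct and follows essentially the same route as the paper's own proof: the identity $f_1^{kl+j} = (f_j \circ \cdots \circ f_1)\circ (f_k\circ\cdots\circ f_1)^l$, the map $\sigma_j(l)=kl+j$ with $\overline{\sigma_j}(q)=p$ and Lemma \ref{fun}, and then passing the continuous prefix through the $q$-limit. The only (welcome) addition is your explicit remark about the $j=0$ case via the empty-composition convention, which the paper leaves implicit.
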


\begin{proof} First, notice that if $n = kl + j$ for some  $l \in \N$ and  $j < k$, then we obtain that
$$
f_{1}^n = f_{1}^{kl+j}  =  f_j \circ .... \circ f_2 \circ f_1 \circ (f_k \circ  .... \circ f_2 \circ f_1)^l.
$$
Fix $j < k$ and  $p \in C_j^*$. Consider the function $\sigma_{j}: \N \to C_j$ defined by $\sigma_j(l) = kl + j$, for all $l \in \N$, and let
$q \in \N^*$ such that $\overline{\sigma_j}(q) = p$. By Lemma \ref{fun}, we have that
$$
f_1^p(x) =   p-\lim_{n \to \infty}  f_1^n(x) = q-\lim_{l \to \infty}  f_1^{\sigma_j(l)}(x) =
q-\lim_{l \to \infty}  f_1^{kl + j}(x)
$$
$$
= q-\lim_{l \to \infty}f_j \circ .... \circ f_2 \circ f_1 \big( (f_k \circ  .... \circ f_2 \circ f_1)^l(x) \big)
$$
$$
= f_j \circ .... \circ f_2 \circ f_1 \big(q-\lim_{l \to \infty} (f_k \circ  .... \circ f_2 \circ f_1)^l(x) \big)
$$
$$
= f_j \circ .... \circ f_2 \circ f_1 \big((f_k \circ  .... \circ f_2 \circ f_1)^q(x)\big)
$$
for each $x \in X$. Thus, we must have that
$$
f_1^p = f_j \circ .... \circ f_2 \circ f_1 \circ \big((f_k \circ  .... \circ f_2 \circ f_1)^q\big) \in f_j \circ .... \circ f_2 \circ f_1  \circ E(X,f_{k} \circ ......\circ f_2  \circ f_1).
$$
\end{proof}

 The  nonautonomous discrete dynamical systems $(X,f_{1,\infty})$ satisfying the conditions of Theorem \ref{periodic} are called {\it periodic} and, as we have shown, its   Ellis semigroup can be described by means of the Ellis semigroups of a certain discrete dynamical system.

\medskip

Following the paper \cite{shi} we say that a  nonautonomous discrete dynamical system $(X,\hat{f}_{1,\infty})$ is {\it induced} by the  nonautonomous discrete dynamical system $(X,f_{1,\infty})$ if there is a strictly increasing sequence $(k_n)_{n \in \N}$ of natural numbers such that $\hat{f}_1 := f_{k_1} \circ..... \circ f_1$ and $\hat{f}_n := f_{k_n} \circ..... \circ f_{k_{n-1}+1}$ for each positive $n \in \N$ bigger than $1$. If $(k_n)_{n \in \N}$ is a strictly increasing sequence of positive integers, then we consider the function $\gamma: \N \to \N$ defined by $\gamma(n) = k_n$ for every $n \in \N$. This function will helps us to connect the Ellis semigroups of a nonautonomous discrete dynamical system and one induced by it as follows.

\begin{theorem}\label{formu} If $(X,\hat{f}_{1,\infty})$ is  a  nonautonomous discrete dynamical system {\it induced} by the  nonautonomous discrete dynamical system $(X,f_{1,\infty})$, then
$$
\hat{f}_1^q = f_1^p,
$$
provided that $p, q \in \N^*$ and $\overline{\gamma}(q) = p$, where $\gamma: \N \to \N$ is the function induced by the sequence $(k_n)_{n \in \N}$. In particular,  we have that $E(X,\hat{f}_{1,\infty}) \subseteq E(X,f_{1,\infty})$.
\end{theorem}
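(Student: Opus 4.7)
My plan is to unwind the definition of the induced iterates and then invoke Lemma \ref{fun} to transport the $p$-limit through the reindexing $\gamma$.

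First I would verify the finite-level identity $\hat{f}_1^n = f_1^{\gamma(n)} = f_1^{k_n}$ for every $n \in \N$. This is just a telescoping of compositions: by the definition of the induced system,
$$
\hat{f}_1^n = \hat{f}_n \circ \hat{f}_{n-1} \circ \cdots \circ \hat{f}_1 = (f_{k_n} \circ \cdots \circ f_{k_{n-1}+1}) \circ \cdots \circ (f_{k_2} \circ \cdots \circ f_{k_1+1}) \circ (f_{k_1} \circ \cdots \circ f_1),
$$
and the right-hand side collapses to $f_{k_n} \circ f_{k_n-1} \circ \cdots \circ f_1 = f_1^{k_n}$.

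Next, I would fix $x \in X$ and apply Lemma \ref{fun} to the injective function $\gamma : \N \to \N$ (injectivity follows because $(k_n)_{n \in \N}$ is strictly increasing) and to the sequence $(f_1^m(x))_{m \in \N}$ in the compact space $X$. Since $\overline{\gamma}(q) = p$, the lemma gives
$$
p-\lim_{m \to \infty} f_1^m(x) = q-\lim_{n \to \infty} f_1^{\gamma(n)}(x).
$$
Combining this with the first step and with Definition \ref{2.1}, one obtains
$$
f_1^p(x) = p-\lim_{m \to \infty} f_1^m(x) = q-\lim_{n \to \infty} f_1^{\gamma(n)}(x) = q-\lim_{n \to \infty} \hat{f}_1^n(x) = \hat{f}_1^q(x),
$$
so $\hat{f}_1^q = f_1^p$ as desired.

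For the inclusion $E(X,\hat{f}_{1,\infty}) \subseteq E(X,f_{1,\infty})$, I would use Theorem \ref{2.2} to write $E(X,\hat{f}_{1,\infty}) = \{\hat{f}_1^q : q \in \beta(\N)\}$. Since $\gamma$ is strictly increasing, $\overline{\gamma}$ sends $\beta(\N)$ into $\beta(\N)$ and $\N^*$ into $\N^*$. For $q \in \N^*$ we set $p := \overline{\gamma}(q) \in \N^*$ and the equality already proved yields $\hat{f}_1^q = f_1^p \in E(X,f_{1,\infty})$; for $q = n \in \N$ the first step gives $\hat{f}_1^n = f_1^{k_n} \in E(X,f_{1,\infty})$ directly (with the convention $\hat{f}_1^0 = f_1^0 = \mathrm{id}$). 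The only real technical point is the correct application of Lemma \ref{fun} with the reindexing sequence $\gamma$, and this is straightforward once the identity $\hat{f}_1^n = f_1^{k_n}$ is in hand, so I do not expect a serious obstacle.
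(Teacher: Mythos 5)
Your proof is correct and follows essentially the same route as the paper: both rest on the telescoping identity $\hat{f}_1^n = f_1^{k_n}$ and then transport the limit along $\gamma$ using $\overline{\gamma}(q)=p$. The only difference is that you invoke Lemma \ref{fun} (with $\sigma=\gamma$, injective since $(k_n)$ is strictly increasing) where the paper re-runs the same neighborhood argument by hand; your version is, if anything, slightly cleaner for reusing the lemma.
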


\begin{proof}  We know that
$$
\hat{f}_1^n = f_{k_n} \circ..... \circ f_1 = f_1^{k_n},
$$
for every $n \in \N$. Suppose that $p, q \in \N^*$ satisfy that $\overline{\gamma}(q) = p$ and fix $x \in X$.
If $U \in \mathcal{N}(\hat{f}_1^q(x))$, then we have that $\{ n \in \N : \hat{f}_1^n(x) = f_1^{k_n}(x) \in U \} \in q$ which implies that
$\{ k_n \in \N : f_1^{k_n}(x) \in U \} \in p$. That is, $f^p_1(x) \in cl_X(U)$ and so  $\hat{f}_1^q(x) = f^p_1(x)$.
\end{proof}

In the next theorem, we give a necessary condition which implies that $E(X,\hat{f}_{1,\infty})$ is a subsemigroup of  $E(X,f_{1,\infty})$.

\begin{lemma}\label{sum} Let $\gamma: \N \to \N$ be a one-to-one function.
If $\gamma(n+m) = \gamma(n) + \gamma(m)$ for each $n,m \in \N$, then
$$
\overline{\gamma}(p+q) = \overline{\gamma}(p) + \overline{\gamma}(q),
$$
for each $p, q \in \beta(\N)$.
\end{lemma}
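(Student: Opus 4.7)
The plan is to bootstrap from the hypothesis (additivity of $\gamma$ on $\N\times\N$) first to $\beta(\N)\times\N$, and then to $\beta(\N)\times\beta(\N)$, at each step exploiting the continuity properties of the operation on $\beta(\N)$ together with the general principle that if $h:\beta(\N)\to\beta(\N)$ is continuous and $x=p\text{-}\lim x_n$, then $h(x)=p\text{-}\lim h(x_n)$.

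First I would establish that $\overline{\gamma}(p+n)=\overline{\gamma}(p)+\gamma(n)$ for every $p\in\beta(\N)$ and $n\in\N$. Writing $p+n=\overline{\lambda_n}(p)=p\text{-}\lim_m(m+n)$ and using the hypothesis $\gamma(m+n)=\gamma(m)+\gamma(n)$, the Stone extension of $\gamma$ gives
$$
\overline{\gamma}(p+n)=p\text{-}\lim_{m\to\infty}\gamma(m+n)=p\text{-}\lim_{m\to\infty}\bigl(\gamma(m)+\gamma(n)\bigr)=p\text{-}\lim_{m\to\infty}\overline{\lambda_{\gamma(n)}}(\gamma(m)).
$$
Since the shift $\overline{\lambda_{\gamma(n)}}:\beta(\N)\to\beta(\N)$ is continuous (it is the Stone extension of an injection $\N\to\N$, as recalled in the discussion preceding Lemma \ref{sum}), it may be pulled through the $p$-limit to obtain $\overline{\lambda_{\gamma(n)}}(\overline{\gamma}(p))=\overline{\gamma}(p)+\gamma(n)$, which is the desired equality.

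Next I would promote this to arbitrary $q\in\beta(\N)$ by invoking the left topological semigroup structure of $(\beta(\N),+)$. By definition $p+q=q\text{-}\lim_n(p+n)$, and continuity of $\overline{\gamma}$ yields
$$
\overline{\gamma}(p+q)=q\text{-}\lim_{n\to\infty}\overline{\gamma}(p+n)=q\text{-}\lim_{n\to\infty}\bigl(\overline{\gamma}(p)+\gamma(n)\bigr),
$$
where the first step uses Step 1. Now the map $s\mapsto\overline{\gamma}(p)+s$ from $\beta(\N)$ to itself is continuous because $(\beta(\N),+)$ is left topological, so it passes through the $q$-limit to give $\overline{\gamma}(p)+\bigl(q\text{-}\lim_n\gamma(n)\bigr)=\overline{\gamma}(p)+\overline{\gamma}(q)$.

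I don't expect any serious obstacle here; the whole proof is a careful two-step application of the fact that a Stone-continuous function commutes with $p$-limits. The only bookkeeping issue is remembering that, in the left topological semigroup $(\beta(\N),+)$, it is the map $s\mapsto r+s$ that is continuous in $s$ for every fixed $r$, while continuity of $r\mapsto r+n$ for $n\in\N$ comes separately from the fact that right-shifting by a natural number is itself a Stone extension. Once those two continuity facts are invoked in the correct places, the argument closes.
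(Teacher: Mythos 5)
Your proof is correct and follows essentially the same route as the paper's: both expand $p+q$ as the iterated limit $q\text{-}\lim_n\bigl(p\text{-}\lim_m(n+m)\bigr)$, pull $\overline{\gamma}$ through the limits by continuity, apply the additivity hypothesis at the level of natural numbers, and reassemble using continuity of the shift $\overline{\lambda_{\gamma(n)}}$ and the left-continuity of addition on $\beta(\N)$. The only difference is presentational: you isolate the intermediate identity $\overline{\gamma}(p+n)=\overline{\gamma}(p)+\gamma(n)$ as a separate step, whereas the paper runs the whole computation as a single chain of equalities.
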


\begin{proof} Let $p, q \in \beta(\N)$. Then, by the left continuity of the addition  on $\beta(\N)$, we have that
$$
\overline{\gamma}(p+q) = \overline{\gamma}(q-\lim_{n \to \infty}p+n) = q-\lim_{n \to \infty}\overline{\gamma}(p+n) =
$$
$$
q-\lim_{n \to \infty}\overline{\gamma}(p-\lim_{m \to \infty}(n+m)) = q-\lim_{n \to \infty}\big(p-\lim_{m \to \infty}\gamma(n+m)\big)
$$
$$
= q-\lim_{n \to \infty}\big(p-\lim_{m \to \infty}(\gamma(n) + \gamma(m))\big)
$$
$$
= q-\lim_{n \to \infty}\big(\gamma(n) + (p-\lim_{m \to \infty} \gamma(m))\big)
$$
$$
=   \big(p-\lim_{m \to \infty} \gamma(m)\big) + \big(q-\lim_{n \to \infty}\gamma(n)\big)
$$
$$
= \overline{\gamma}(p) + \overline{\gamma}(q).
$$
\end{proof}

\begin{theorem} $(X,\hat{f}_{1,\infty})$ is  a  nonautonomous discrete dynamical system {\it induced} by the  nonautonomous discrete dynamical system $(X,f_{1,\infty})$ via a strictly increasing sequence $(k_n)_{n \in \N}$ of positive integers. If $k_n + k_m = k_{n+m}$ for all $n, m \in \N$, then
$E(X,\hat{f}_{1,\infty})$ is a subsemigroup of  $E(X,f_{1,\infty})$.
\end{theorem}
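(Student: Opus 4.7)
The plan is to combine Theorem \ref{formu} with Lemma \ref{sum} in a direct computation. By Theorem \ref{formu}, the containment $E(X,\hat{f}_{1,\infty}) \subseteq E(X,f_{1,\infty})$ is already established via $\hat{f}_1^q = f_1^{\overline{\gamma}(q)}$, so what remains is closure under the semigroup operation $\ast$ of $E(X,f_{1,\infty})$.

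First I would pick two arbitrary elements $\hat{f}_1^q, \hat{f}_1^r \in E(X,\hat{f}_{1,\infty})$ with $q, r \in \beta(\N)$. Using Theorem \ref{formu} twice, I rewrite them as $f_1^{\overline{\gamma}(q)}$ and $f_1^{\overline{\gamma}(r)}$ inside $E(X,f_{1,\infty})$. By the definition of $\ast$ in $E(X,f_{1,\infty})$, their product is
\[
f_1^{\overline{\gamma}(q)} \ast f_1^{\overline{\gamma}(r)} = f_1^{\overline{\gamma}(r)+\overline{\gamma}(q)}.
\]

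Next, I would invoke Lemma \ref{sum}. The hypotheses of that lemma are satisfied: since $(k_n)_{n\in\N}$ is strictly increasing, the map $\gamma(n)=k_n$ is one-to-one, and the assumption $k_{n+m}=k_n+k_m$ says precisely that $\gamma(n+m)=\gamma(n)+\gamma(m)$. Hence $\overline{\gamma}(r) + \overline{\gamma}(q) = \overline{\gamma}(r+q)$, so
\[
f_1^{\overline{\gamma}(r)+\overline{\gamma}(q)} = f_1^{\overline{\gamma}(r+q)}.
\]
Applying Theorem \ref{formu} once more (with the ultrafilter $r+q \in \beta(\N)$) rewrites this as $\hat{f}_1^{r+q}$, which is clearly an element of $E(X,\hat{f}_{1,\infty})$. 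This yields both closure under $\ast$ and the bonus observation that the restriction of $\ast$ to $E(X,\hat{f}_{1,\infty})$ coincides with its own intrinsic semigroup operation.

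I do not foresee a real obstacle: the content of the theorem is essentially packaged inside Lemma \ref{sum}, which transports the ordinary compatibility $k_{n+m}=k_n+k_m$ from $\N$ up to $\beta(\N)$ using the left continuity of addition, and the remaining steps are bookkeeping. The only minor care point is ensuring that $r+q \in \beta(\N)$ (as opposed to $\N^*$) is allowed when invoking Theorem \ref{formu}; this is fine because the same formula $\hat{f}_1^n = f_1^{k_n}$ handles the case $r+q \in \N$ directly, and in any event $\beta(\N)$ is a semigroup so $r+q$ lies in $\N^*$ whenever either of $r,q$ does.
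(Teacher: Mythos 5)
Your proposal is correct and follows essentially the same route as the paper: both arguments reduce closure under $\ast$ to the identity $\overline{\gamma}(p+q)=\overline{\gamma}(p)+\overline{\gamma}(q)$ from Lemma \ref{sum} combined with the identification $\hat{f}_1^{q}=f_1^{\overline{\gamma}(q)}$ from Theorem \ref{formu}. Your closing remark about extending Theorem \ref{formu} from $\N^*$ to all of $\beta(\N)$ is a small point of care that the paper glosses over, but it does not change the argument.
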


\begin{proof} Consider the function $\gamma: \N \to \N$ defined by $\gamma(n) = k_n$ for each $n \in \N$. Fix $p, q \in \beta(\N)$ and put
$\overline{\gamma}(p) = p'$ and $\overline{\gamma}(q) = q'$.  By  Lemma \ref{sum}, we know that
$\overline{\gamma}(p+q) = \overline{\gamma}(p) + \overline{\gamma}(q) = p' + q'$ and so, by Theorem \ref{formu}, we obtain that
$$
\hat{f}_1^{p+q} = f_1^{p'+q'} = f_1^{q'} \ast f_1^{p'} = \hat{f}_1^{q} \ast \hat{f}_1^{p}.
$$
\end{proof}

\section{Some applications of the Ellis semigroup}

In this section, we shall mainly consider those nonautonomous discrete dynamical systems $(X,f_{1,\infty})$ for which
the sequence $(f_n)_{n \in \N}$ converges uniformly to a function $\phi: X \to X$. For our purposes we need the following general notions and some preliminary results.

\begin{definition} Let $X$ be a compact metric space and let $(f_n)_{n \in \N}$ and $(g_n)_{n \in \N}$ be
two sequences of conti\-nuous functions from $X$ to $X$.
We say that  $(f_n)_{n \in \N}$ and $(g_n)_{n \in \N}$ are  {\it asymptotic} at $x \in X$ if for every $\epsilon > 0$ there is $k \in \N$ such that
$$
d(f_n(x),g_n(x)) < \epsilon \   \text{for every}  \ n \in \N \ \text{with} \ n > k.
$$
$(f_n)_{n \in \N}$ and $(g_n)_{n \in \N}$ are  said to be {\it asymptotic} if they are asymptotic at every point of $X$.
The sequences  $(f_n)_{n \in \N}$ and $(g_n)_{n \in \N}$ are called {\it uniformly asymptotic} if for every $\epsilon > 0$ there is $k \in \N$ such that
$$
d(f_n(x),g_n(x)) < \epsilon \ \text{for every} \ x \in X \ \text{and } \ n \in \N \ \text{with} \ n > k.
$$
\end{definition}

By using free ultrafilter convergence we may generalize the previous notions as follows:

\begin{definition} Let $X$ be a compact metric space and let $(f_n)_{n \in \N}$ and $(g_n)_{n \in \N}$ be
two sequences of conti\-nuous functions from $X$ to $X$.
For $p \in \N^*$, we say that
$(f_n)_{n \in \N}$ and $(g_n)_{n \in \N}$  are $p$-{\it asymptotic} at $x$ if for every $\epsilon > 0$ we have that
$$
\{ n \in \N : d(f_n(x),g_n(x)) < \epsilon \}  \in p.
$$
$(f_n)_{n \in \N}$ and $(g_n)_{n \in \N}$ are  said to be $p$-{\it asymptotic} if they are $p$-asymptotic at every point of $X$.
The sequences $(f_n)_{n \in \N}$ and $(g_n)_{n \in \N}$  are said to be $p$-{\it uniformly asymptotic}
if for every $\epsilon > 0$ we have that
$$
\{ n \in \N : \forall x \in X \big(d(f_n(x),g_n(x)) < \epsilon \big)   \}  \in p.
$$
\end{definition}

It is evident that  the asymptotic property  implies the $p$-asymptotic property, for each $p \in \N^*$. For a given $p \in \N^*$, it is easy to construct an example of two $p$-asymptotic  sequences of continuous functions that are not asymptotic.

\begin{lemma}\label{aprox} Let $X$ be a compact metric space and let $p \in \N^*$.
Let $(f_n)_{n \in \N}$ and $(g_n)_{n \in \N}$ be two sequences of continuous functions from $X$ to $X$. If $(f_n)_{n \in \N}$ and $(g_n)_{n \in \N}$ are $p$-asymptotic at $x \in X$, then
$$
p-\lim_{n \to \infty}f_n(x) = p-\lim_{n \to \infty}g_n(x).
$$
\end{lemma}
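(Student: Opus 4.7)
The plan is to set $y = p\text{-}\lim_{n\to\infty} f_n(x)$ and $z = p\text{-}\lim_{n\to\infty} g_n(x)$ (both exist and are unique since $X$ is compact Hausdorff, as recalled after the definition of $p$-limit), and then show $y = z$ by proving $d(y,z) < \epsilon$ for every $\epsilon > 0$.

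Fix $\epsilon > 0$. First I would collect three sets that all belong to $p$:
\begin{align*}
A &= \{ n \in \N : d(f_n(x), y) < \epsilon/3 \}, \\
B &= \{ n \in \N : d(g_n(x), z) < \epsilon/3 \}, \\
C &= \{ n \in \N : d(f_n(x), g_n(x)) < \epsilon/3 \}.
\end{align*}
The membership $A \in p$ and $B \in p$ follow from the definition of the $p$-limit (taking the open balls $B(y,\epsilon/3)$ and $B(z,\epsilon/3)$ as the neighborhoods $V$), while $C \in p$ is exactly the $p$-asymptotic hypothesis at $x$ applied with $\epsilon/3$.

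Since $p$ is a filter, $A \cap B \cap C \in p$, and in particular this intersection is nonempty (indeed any member of a free ultrafilter is infinite). Pick any $n \in A \cap B \cap C$. The triangle inequality then gives
\[
d(y,z) \leq d(y, f_n(x)) + d(f_n(x), g_n(x)) + d(g_n(x), z) < \epsilon/3 + \epsilon/3 + \epsilon/3 = \epsilon.
\]
As $\epsilon > 0$ was arbitrary, $d(y,z) = 0$, whence $y = z$.

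There is no real obstacle here: the argument is the standard $\epsilon/3$-trick, and the only subtlety is remembering that $p$-limits in compact Hausdorff spaces exist and are unique (so that the left- and right-hand sides of the desired equality are well-defined single points), together with the fact that intersections of finitely many members of an ultrafilter are again in the ultrafilter. No continuity of $f_n$ or $g_n$ is needed for this particular statement.
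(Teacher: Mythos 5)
Your proof is correct and is essentially the same as the paper's: both arguments intersect the three ultrafilter sets and apply the triangle inequality to a single index in the intersection (the paper uses radius $\epsilon$ and concludes $d(y,z)<3\epsilon$, while you use $\epsilon/3$ — an immaterial difference). No further comment is needed.
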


\begin{proof} Fix $\epsilon > 0$. Put $y = p-\lim_{n \to \infty}f_n(x)$ and $z = p-\lim_{n \to \infty}g_n(x)$. We know that
$$
\{ n \in \N : d(f_n(x), y) < \epsilon  \}  \cap \{ n \in \N : d(g_n(x),z) < \epsilon  \} \in p \ \text{and}
$$
$$
\{ n \in \N : d(f_n(x),g_n(x)) < \epsilon \}  \in p.
$$
Hence, it is possible to find $m \in \N$ so that
$$
d(y,z) \leq  d(f_m(x),y) + d(f_m(x),g_m(x)) + d(g_m(x),z) < 3\epsilon.
$$
Since $\epsilon$ was taken arbitrarily, we conclude that $y = z$.
\end{proof}

\begin{lemma}\label{uniap} Let $(X,f_{1,\infty})$ be a nonautonomous discrete dynamical system. If the sequence $(f_n)_{n \in \N}$ converges uniformly to a function $\phi: X \to X$, then  the sequences  $(\phi \circ f_1^n)_{n \in \N}$ and $(f_1^{n+1})_{n \in \N}$ are uniformly asymptotic.
\end{lemma}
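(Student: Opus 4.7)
The proof should be very short. The plan is to unfold the definition of $f_1^{n+1}$ and then apply the uniform convergence hypothesis at the point $f_1^n(x)$.

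First I would recall that by definition of the iterates,
$$
f_1^{n+1}(x) = f_{n+1}\bigl(f_1^n(x)\bigr)
$$
for every $n \in \N$ and every $x \in X$. Therefore, for a fixed $n \in \N$ and $x \in X$,
$$
d\bigl(\phi(f_1^n(x)),\, f_1^{n+1}(x)\bigr) = d\bigl(\phi(f_1^n(x)),\, f_{n+1}(f_1^n(x))\bigr).
$$

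Next, I would fix $\epsilon > 0$ and invoke the uniform convergence of $(f_n)_{n \in \N}$ to $\phi$: there exists $k \in \N$ such that $d(f_m(y), \phi(y)) < \epsilon$ for every $y \in X$ and every $m > k$. Applying this with $m = n+1$ and $y = f_1^n(x) \in X$, for every $n \geq k$ and every $x \in X$ we obtain
$$
d\bigl(\phi(f_1^n(x)),\, f_1^{n+1}(x)\bigr) = d\bigl(\phi(f_1^n(x)),\, f_{n+1}(f_1^n(x))\bigr) < \epsilon,
$$
which is precisely the definition of $(\phi \circ f_1^n)_{n \in \N}$ and $(f_1^{n+1})_{n \in \N}$ being uniformly asymptotic.

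There is no real obstacle here; the only thing worth watching is that the bound obtained from uniform convergence is applied at the point $f_1^n(x)$, which ranges over a subset of $X$, so the \emph{uniform} nature of the convergence (as opposed to mere pointwise convergence) is exactly what allows the choice of $k$ to be independent of $x$, giving uniform asymptoticity rather than just asymptoticity.
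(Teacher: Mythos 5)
Your proof is correct and follows exactly the same route as the paper's: unfold $f_1^{n+1}(x)=f_{n+1}(f_1^n(x))$ and apply the uniform convergence hypothesis at the point $y=f_1^n(x)$, with the index shift $m=n+1$. Nothing is missing.
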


\begin{proof}  Let $\epsilon > 0$. By assumption,  there is $k \in \N$ such that
$d(\phi(x),f_n(x)) < \epsilon$,  for every $x \in X$ and for each  $n \in \N$ bigger than $k$. This implies that
$$
d(\phi(f_1^n(x)),f_1^{n+1}(x)) = d(\phi(f_1^n(x)),f_{n+1}(f_1^n(x))) < \epsilon,
$$
for every $n \in \N$ with $n > k$ and for every $x \in X$.
\end{proof}

Let  see that in a  nonautonomous discrete dynamical systems that we are considering
 some elements of their Ellis semigroup can be somehow related one to the other.

\begin{theorem}\label{ca} Let $(X,f_{1,\infty})$ be a nonautonomous discrete dynamical system. If  $(f_n)_{n \in \N}$ converges uniformly to a function $\phi: X \to X$, then $\phi \circ f_1^p = f_1^{p+1}$ for each $p \in \N^*$.
\end{theorem}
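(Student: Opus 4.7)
The plan is to compute $f_1^{p+1}(x)$ by shifting indices so as to meet the sequence $(f_1^{n+1})_{n \in \N}$, replace this sequence with the $p$-asymptotic sequence $(\phi \circ f_1^n)_{n \in \N}$ provided by Lemma \ref{uniap}, and finally pull $\phi$ through the $p$-limit by continuity.

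First, I would fix $x \in X$ and apply Lemma \ref{fun} with $p+1$ in place of $p$ and $k=1$ to the sequence $(f_1^n(x))_{n \in \N}$, obtaining
\[
f_1^{p+1}(x) = (p+1)-\lim_{n \to \infty} f_1^n(x) = p-\lim_{i \to \infty} f_1^{i+1}(x).
\]
Next, Lemma \ref{uniap} tells us that the sequences $(\phi \circ f_1^n)_{n \in \N}$ and $(f_1^{n+1})_{n \in \N}$ are uniformly asymptotic, hence in particular $p$-asymptotic at every $x \in X$. Applying Lemma \ref{aprox} therefore gives
\[
p-\lim_{n \to \infty} f_1^{n+1}(x) = p-\lim_{n \to \infty} \phi(f_1^n(x)).
\]

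The final step is to pass $\phi$ outside the $p$-limit. Because $\phi$ is a uniform limit of continuous functions on the compact metric space $X$, it is itself continuous, and $p$-limits are preserved under continuous maps; thus
\[
p-\lim_{n \to \infty} \phi(f_1^n(x)) = \phi\!\left(p-\lim_{n \to \infty} f_1^n(x)\right) = \phi(f_1^p(x)).
\]
Chaining the three equalities gives $f_1^{p+1}(x) = \phi(f_1^p(x))$ for every $x \in X$, which is the desired identity $\phi \circ f_1^p = f_1^{p+1}$.

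There is no real obstacle here: the argument is essentially bookkeeping with $p$-limits. The one point that deserves a brief mention in the write-up is why $\phi$ is continuous (uniform convergence on a compact metric space), since continuity of $\phi$ is what legitimizes interchanging $\phi$ with the $p$-limit in the last step.
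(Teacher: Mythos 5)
Your argument is correct and is essentially the paper's own proof: both combine Lemma \ref{uniap} with Lemma \ref{aprox} to identify $p\text{-}\lim_n f_1^{n+1}(x)$ with $p\text{-}\lim_n \phi(f_1^n(x))$, use continuity of $\phi$ to pull it through the $p$-limit, and identify $(p+1)\text{-}\lim_n f_1^n(x)$ with $p\text{-}\lim_n f_1^{n+1}(x)$ via the shift. Your explicit remark that $\phi$ is continuous as a uniform limit of continuous functions is a small but welcome addition that the paper leaves implicit.
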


\begin{proof} Let $p \in \N^*$ and let $x \in X$. By Lemma \ref{uniap}, we have that the sequences  $(\phi \circ f_1^n)_{n \in \N}$ and $(f_1^{n+1})_{n \in \N}$ are  asymptotic at $x$. Then, by Lemma \ref{aprox}, we obtain that
$$
p-\lim_{n \to \infty}\phi \circ f_1^n(x) = p-\lim_{n \to \infty}f_1^{n+1}(x).
$$
 Hence,
$$
\phi \circ f_1^p(x) = \phi\big(p-\lim_{n \to \infty}  f_1^n(x)\big) = p-\lim_{n \to \infty}\phi \circ f_1^n(x) = p-\lim_{n \to \infty}f_1^{n+1}(x)
$$
$$
= (p+1)-\lim_{n \to \infty}f_1^{n}(x) = f_1^{p+1}(x).
$$
\end{proof}

The condition $p \in \N^*$ of the previous theorem is necessary: Indeed,
for each $n \in \N$,   $f_{n}: [0,1] \to [0,1]$ will denote the constant function with value $\frac{1}{n+1}$. It is evident that the sequence  $(f_n)$ converges uniformly to the constant function $\phi: [0,1] \to [0,1]$ with value $0$ and $\phi^m(f_1^n (x)) \neq f_1^{n+m}(x)$ for all $x \in X$ and for all $m, n \in \N$.

\medskip

Next, we give a new proof of a result of R. Kempf \cite{kempf} which uses elements of  $E(X,f_{1,\infty})$. This proof  is quite different from the one given in \cite{ca}.

\medskip

For a nonautonomous discrete dynamical system $(X,f_{1,\infty})$, recall that the  $\omega$-{\it limit set} of a point $x \in X$ is the set
$$
\omega(x,f_{1,\infty}) :=  \bigcap_{ n \in \N }cl(\{ f_{1}^m(x) : n \leq m \in \N  \}).
$$
This generalization of $\omega$-limit set was first considered in the paper \cite{kempf}.
In terms of $p$-limit points we have that
$$
\omega(x,f_{1,\infty}) = \{ y \in X : \exists   p \in \N^* ( y = p-\lim_{n \to \infty}f_1^n(x)) \},
$$
for each $x \in X$.

\begin{theorem}{\bf [R. Kempf]} Let $(X,f_{1,\infty})$ be a nonautonomous discrete dynamical system. If $(f_n)_{n \in \N}$ converges uniformly to a function $\phi: X \to X$, then
$$
\phi\big(\omega(x,f_{1,\infty})\big) = \omega(x,f_{1,\infty}),
$$
for every $x \in X$.
\end{theorem}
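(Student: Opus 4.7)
The plan is to derive both inclusions directly from the characterization
$$
\omega(x,f_{1,\infty}) = \{f_1^p(x) : p \in \N^*\}
$$
together with the shift identity $\phi \circ f_1^p = f_1^{p+1}$ supplied by Theorem \ref{ca}. The uniform convergence hypothesis is used only through that identity; once it is in hand, the result becomes a translation statement about $\N^*$ under $p \mapsto p+1$.

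For the forward inclusion, I would pick $y \in \omega(x,f_{1,\infty})$, write $y = f_1^p(x)$ for some $p \in \N^*$, and observe that Theorem \ref{ca} gives
$$
\phi(y) = \phi\bigl(f_1^p(x)\bigr) = f_1^{p+1}(x).
$$
To conclude $\phi(y) \in \omega(x,f_{1,\infty})$ it suffices to check that $p+1 \in \N^*$, which follows because $\overline{\lambda_1}(p) = p+1$ and $\overline{\lambda_1}$ maps $\N^*$ into $\N^*$ (the paper already notes that $\overline{\lambda_n}$ is a homeomorphism of $\N^*$ onto itself).

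For the reverse inclusion, I would again start with $y = f_1^p(x)$ for some $p \in \N^*$ and try to exhibit a preimage inside $\omega(x,f_{1,\infty})$. The paper already supplies $p-1 \in \N^*$ with $\overline{\lambda_1}(p-1) = p$, i.e.\ $(p-1) + 1 = p$. Setting $z := f_1^{p-1}(x)$, Theorem \ref{ca} gives
$$
\phi(z) = \phi\bigl(f_1^{p-1}(x)\bigr) = f_1^{(p-1)+1}(x) = f_1^p(x) = y,
$$
and $z \in \omega(x,f_{1,\infty})$ because $p-1 \in \N^*$. This proves $\omega(x,f_{1,\infty}) \subseteq \phi\bigl(\omega(x,f_{1,\infty})\bigr)$.

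The main (and essentially only) subtlety is the surjectivity half: one must know that every free ultrafilter sits in the image of the shift $p \mapsto p+1$ on $\N^*$. This is exactly the content of the observation recalled before the definition of the Ellis operation, so there is no real obstacle once Theorem \ref{ca} is available; the proof is really just a bookkeeping of shifts in $\beta(\N)$.
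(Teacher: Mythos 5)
Your proposal is correct and follows essentially the same route as the paper's own proof: both inclusions are read off from the identity $\phi\circ f_1^{p}=f_1^{p+1}$ of Theorem \ref{ca}, using the fact that the shift $p\mapsto p+1$ is a bijection of $\N^*$ so that both $p+1$ and $p-1$ remain free ultrafilters. Your write-up merely makes explicit the surjectivity point that the paper leaves implicit.
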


\begin{proof}  According to Theorem \ref{ca}, for every  $x \in X$ and $p \in \N^*$ we have that
$$
\phi \big(f_1^{p}(x)\big) = \phi \circ f_1^{p}(x) = f_1^{p+1}(x),
$$
and
$$
\phi \big(f_1^{p-1}(x)\big) = \phi \circ f_1^{p-1}(x) = f_1^{(p-1)+1}(x) = f_1^p(x).
$$
Thus, we obtain that  $\phi\big(\omega(x,f_{1,\infty})\big) = \omega(x,f_{1,\infty})$.
\end{proof}

We already have mentioned that  $E(X,f_{1,\infty})$ could contain  discontinuous functions. In the next result, we give a necessary
condition that guarantees the continuity of all function of the Ellis semigroup.

\begin{theorem}\label{itecon} Let $(X,f_{1,\infty})$ be a nonautonomous discrete dynamical system. If  either
\begin{enumerate}
\item the family $\{  f_1^n : n \in \N \}$ is equicontinuous; or

\item $(f_n)_{n \in \N}$ converges uniformly to a function $\phi: X \to X$ and the family $\{ \phi \circ f_1^n : n \in \N \}$ is equi\-con\-ti\-nuous,
\end{enumerate}
then the function $f_1^p: X \to X$ is continuous for each $p \in \N^*$.
\end{theorem}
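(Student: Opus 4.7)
The plan is to reduce both cases to a single general fact: if a family $\{g_n : n \in \N\}$ of continuous functions $X \to X$ is equicontinuous and $p \in \N^*$, then the pointwise $p$-limit $y \mapsto p\text{-}\lim_{n \to \infty} g_n(y)$ is continuous on $X$. This is a three-$\epsilon$ argument with the ultrafilter doing the work of ``choosing a good $n$''. Fix $x \in X$ and $\epsilon > 0$. Equicontinuity at $x$ yields $\delta > 0$ such that $d(y,x) < \delta$ implies $d(g_n(y), g_n(x)) < \epsilon/3$ for every $n \in \N$. Writing $g^{p}(y) := p\text{-}\lim_{n\to\infty} g_n(y)$, the sets
$$
A_y = \{ n \in \N : d(g_n(y), g^{p}(y)) < \epsilon/3 \}, \qquad A_x = \{ n \in \N : d(g_n(x), g^{p}(x)) < \epsilon/3 \}
$$
both belong to $p$, hence $A_y \cap A_x \neq \emptyset$; picking any $n$ in the intersection and using the triangle inequality gives $d(g^{p}(y), g^{p}(x)) < \epsilon$.

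For case (1), I apply the lemma directly with $g_n = f_1^n$, since $f_1^p(y) = p\text{-}\lim_{n\to\infty} f_1^n(y)$ by definition.

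For case (2), I apply the lemma with $g_n = \phi \circ f_1^n$ to obtain that the map $y \mapsto p\text{-}\lim_{n \to \infty} \phi \circ f_1^n(y)$ is continuous for every $p \in \N^*$. By Lemma \ref{uniap} and Lemma \ref{aprox} (the content of Theorem \ref{ca}), this limit equals $p\text{-}\lim_{n\to\infty} f_1^{n+1}(y) = f_1^{p+1}(y)$. Hence $f_1^{p+1}$ is continuous for every $p \in \N^*$. To conclude that $f_1^q$ is continuous for \emph{every} $q \in \N^*$, I invoke the fact recalled just before Lemma \ref{fun}: the Stone extension $\overline{\lambda_1}$ of the shift induces a homeomorphism of $\N^*$, so every $q \in \N^*$ admits $q-1 \in \N^*$ with $(q-1)+1 = q$.

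The only genuinely delicate point is the general lemma above, and even there the argument is short once one sees that the ultrafilter lets you select a single $n$ that simultaneously approximates $g^{p}(y)$ and $g^{p}(x)$. The application of case (2) requires nothing beyond bookkeeping: the identification $p\text{-}\lim \phi \circ f_1^n = f_1^{p+1}$ is already established in the proof of Theorem \ref{ca}, and the surjectivity of $\overline{\lambda_1}$ on $\N^*$ lets us reach every index $q \in \N^*$.
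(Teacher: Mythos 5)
Your proof is correct, but it is organized differently from the paper's. The paper proves only case (2), and does so by a direct five-term triangle inequality: for $d(x,y)<\delta$ it bounds $d(f_1^p(x),f_1^p(y))$ by passing from $f_1^p(x)$ to $f_1^{m+1}(x)$ (ultrafilter choice of $m$), from $f_1^{m+1}$ to $\phi\circ f_1^{m}$ (Lemma \ref{uniap}), across via equicontinuity of $\{\phi\circ f_1^n\}$, and back, obtaining $5\cdot\frac{\epsilon}{5}$. You instead isolate a clean general lemma --- an equicontinuous family has continuous $p$-limit functions, proved by the three-$\epsilon$ argument with the ultrafilter selecting a single index in $A_x\cap A_y$ --- which settles case (1) outright (the paper omits that case, though it is the same mechanism) and, applied to $g_n=\phi\circ f_1^n$ in case (2), yields continuity of $y\mapsto p\text{-}\lim_n\phi\circ f_1^n(y)=f_1^{p+1}(y)$ via the identification from Theorem \ref{ca}; you then need one extra step the paper does not, namely that $\overline{\lambda_1}$ maps $\N^*$ onto $\N^*$ so that every $q\in\N^*$ equals $p+1$ for some $p\in\N^*$ --- a fact the paper records in Section 1, so the step is legitimate. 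Your route buys a reusable lemma and a uniform treatment of both hypotheses at the cost of the re-indexing detour; the paper's route is self-contained for case (2) but leaves case (1) to the reader. Both are sound.
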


\begin{proof} We only prove the theorem when the sequence $(f_n)_{n \in \N}$ converges uniformly to a function $\phi: X \to X$ and the family $\{ \phi \circ f_1^n : n \in \N \}$ is equicontinuous. Let $\epsilon > 0$ and choose $\delta > 0$ so that if $x, y \in X$ and $d(x,y) < \delta$, then
$$
d(\phi \circ f_1^{n}(x),\phi \circ f_1^{n}(y)) < \frac{\epsilon}{5},
$$
for every $n \in \N$. Let $x, y \in X$ be such that $d(x,y) < \delta$. By Lemma \ref{uniap} there is $k \in \N$ such that
$$
d(\phi \circ f_1^{n}(x),f_1^{n+1}(x)) < \frac{\epsilon}{5},
$$
for each $n \in \N$ with $n > k$. Fix  $p \in \N^*$. Then choose $m \in \N$ so that $m > k$ and
$$
d(f_1^p(x),f_1^p(y)) \leq  d(f_1^p(x),f_1^{m+1}(x)) + d(f_1^{m+1}(x),f_1^{m+1}(y)) + d(f_1^{m+1}(y),f_1^p(y))
$$
$$
\leq d(f_1^p(x),f_1^{m+1}(x))  +  d(\phi \circ f_1^{m}(x),f_1^{m+1}(x)) + d(\phi \circ f_1^{m}(x),\phi \circ f_1^{m}(y))
$$
$$
+ d(\phi \circ f_1^{m}(y),f_1^{m+1}(y)) + d(f_1^{m+1}(y),f_1^p(y)) \leq
$$
$$
\frac{\epsilon}{5} + \frac{\epsilon}{5} + \frac{\epsilon}{5} + \frac{\epsilon}{5} + \frac{\epsilon}{5} =  \epsilon.
$$
This shows the continuity of $f_1^p$.
\end{proof}

We shall see, in the next theorem,  that there is a nice relationship between the Ellis semigroup $E(X,\phi)$ and $E(X,f_{1,\infty})^*$ when the sequence of functions $(f_n)_{n \in \N}$ converges uniformly to a function $\phi: X \to X$.

\begin{lemma}\label{suma} Let $(X,f_{1,\infty})$ be a nonautonomous discrete dynamical system. Then,
$$
f_1^{p+q}(x) = q-\lim_{n \to \infty}f_1^{p+n}(x),
$$
for every $p, q \in \N^*$ and  $x \in X$.
\end{lemma}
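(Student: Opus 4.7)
The plan is to reduce the identity to two standard facts: (i) continuous functions preserve $p$-limits in compact Hausdorff spaces, and (ii) by definition, $p+q = q-\lim_{n \to \infty}(p+n)$ in $\beta(\N)$.

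First, I would fix $x \in X$ and consider the map $\Phi_x : \beta(\N) \to X$ defined by $\Phi_x(r) = f_1^r(x)$. Observe that $\Phi_x$ is just the composition of the Stone extension $\overline{\psi} : \beta(\N) \to X^X$ (from the proof of Theorem \ref{2.2}, where $\psi(n) = f_1^n$) with the evaluation map $\operatorname{ev}_x : X^X \to X$, $g \mapsto g(x)$. Since both $\overline{\psi}$ and $\operatorname{ev}_x$ are continuous (the latter because $X^X$ carries the pointwise topology), $\Phi_x$ is continuous. Equivalently, $\Phi_x$ is the Stone extension of the function $n \mapsto f_1^n(x)$, so for any $r \in \beta(\N)$ we have $\Phi_x(r) = r-\lim_{n \to \infty} f_1^n(x) = f_1^r(x)$.

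Next, by the very definition of addition in $\beta(\N)$ recalled in Section 2, $p+q = q-\lim_{n \to \infty}(p+n)$, the limit being taken in the compact Hausdorff space $\beta(\N)$. Applying the continuous function $\Phi_x$ and using that continuous functions preserve $p$-limits (the property of $p$-limits already used throughout the paper), we obtain
\[
f_1^{p+q}(x) \;=\; \Phi_x(p+q) \;=\; \Phi_x\!\left(q-\lim_{n \to \infty}(p+n)\right) \;=\; q-\lim_{n \to \infty} \Phi_x(p+n) \;=\; q-\lim_{n \to \infty} f_1^{p+n}(x),
\]
which is exactly what we wanted.

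There is no real obstacle here; the only thing to be careful about is to justify that $\Phi_x$ really is the Stone extension of $n \mapsto f_1^n(x)$, so that $\Phi_x(r) = r-\lim_{n \to \infty} f_1^n(x)$ for \emph{every} $r \in \beta(\N)$ (in particular for $r = p+q$ and for $r = p+n$ with $n \in \N$). This follows at once from the universal property of $\beta(\N)$ applied to the bounded function $n \mapsto f_1^n(x)$ into the compact space $X$, together with the fact noted in the discussion preceding Definition \ref{2.1} that $\overline{f}(p)$ coincides with the $p$-limit of $(f(n))_{n \in \N}$.
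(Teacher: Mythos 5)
Your proof is correct, but it follows a genuinely different route from the paper's. The paper argues by hand with ultrafilter combinatorics: it sets $y=(p+q)-\lim_{n\to\infty}f_1^n(x)$ and $z=q-\lim_{n\to\infty}f_1^{p+n}(x)$, takes a neighborhood $V$ of $y$, puts $A=\{\,n\in\N : f_1^n(x)\in V\,\}\in p+q$, unwinds this to $\{\,n\in\N : p+n\in A^*\,\}\in q$, and then checks that $f_1^{p+n}(x)\in\overline{V}$ for every such $n$, concluding $y=z$ from uniqueness of ultrafilter limits. You instead observe that $r\mapsto f_1^r(x)$ is the Stone extension of $n\mapsto f_1^n(x)$ (equivalently $\operatorname{ev}_x\circ\overline{\psi}$), hence continuous on $\beta(\N)$, and apply it to the defining identity $p+q=q-\lim_{n\to\infty}(p+n)$ using preservation of ultrafilter limits under continuous maps. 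Both arguments are sound. Yours is shorter, exhibits the lemma as a one-line instance of ``continuous maps commute with $q$-limits,'' and sidesteps the paper's small detour through $\overline{V}$ rather than $V$ (harmless here since $X$ is metric, hence regular); the paper's version is more self-contained at the level of explicit ultrafilter membership and matches the computational style of its other proofs. The one point you rightly flag --- that $\Phi_x$ really is the Stone extension, so that $\Phi_x(r)=r-\lim_{n\to\infty}f_1^n(x)$ for every $r\in\beta(\N)$, including $r=p+n$ --- is indeed the only thing needing justification, and it follows from the remark in the preliminaries identifying $\overline{f}(p)$ with the $p$-limit of $(f(n))_{n\in\N}$.
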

\begin{proof} Fix $p, q \in \N^*$ and  $x \in X$. We claim that
$$
f_1^{p+q}(x) = (p+q)-\lim_{n \to \infty}f_1^n(x) = q-\lim_{n \to \infty}f_1^{p+n}(x).
$$
In fact, put $y = (p+q)-\lim_{n \to \infty}f_1^n(x)$ and $z = q-\lim_{n \to \infty}f_1^{p+n}(x)$.
If $V \in \mathcal{N}(y)$, then $A = \{ n \in \N :  f_1^n(x) \in V \} \in p+q$ which implies that
$B = \{ n \in \N : p+n \in A^* \} \in q$. Fix $n \in B$. As  $p+n \in A^*$, we must have that  $\{ m \in \N : m + n \in A \} \in p$ and hence
$\{ m + n  \in \N : m + n \in A \} \in p + n$. Thus, $\{ k \in \N : f_1^k(x) \in V \} \in p+n$. So, $f_1^{p+n}(x) \in \overline{V}$ for each $n \in B$. That is,
$\{ n \in \N : f_1^{p+n}(x) \in \overline{V}\} \in q$. Therefore, $y = z$ because of  the uniqueness of the limit points with respect to an ultrafilter.
\end{proof}

\begin{theorem}\label{accion} Let $(X,f_{1,\infty})$ be a nonautonomous discrete dynamical system such that $(f_n)_{n \in \N}$ converges uniformly to a function $\phi: X \to X$. Then $\phi^q \circ f_1^p = f_1^{p + q}$ for every $p \in \N^*$ and for every $q \in \beta(\N)$. In particular, the Ellis semigroup $E(X,\phi)$ acts on the  semigroup $E(X,f_{1,\infty})^*$.
\end{theorem}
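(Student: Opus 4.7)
The plan is to reduce the statement to two ingredients already available in the excerpt: Theorem \ref{ca}, which supplies $\phi \circ f_1^{p'} = f_1^{p'+1}$ for every $p' \in \N^*$, and Lemma \ref{suma}, which identifies the $q$-limit of the shifted iterates $f_1^{p+n}$ with $f_1^{p+q}$. I would fix $p \in \N^*$ and first treat $q \in \N$ by induction, then pass to $q \in \N^*$ by a $q$-limit argument.

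For the integer case, the base $q = 0$ is trivial since $\phi^0 = \mathrm{id}_X$ and $p+0 = p$. Assuming $\phi^q \circ f_1^p = f_1^{p+q}$ with $q \in \N$, I compute $\phi^{q+1} \circ f_1^p = \phi \circ f_1^{p+q}$, and since $p + q \in \N^*$ whenever $p \in \N^*$, Theorem \ref{ca} yields $\phi \circ f_1^{p+q} = f_1^{(p+q)+1} = f_1^{p+(q+1)}$ by associativity of $+$ on $\beta(\N)$, which closes the induction. For the ultrafilter case, let $q \in \N^*$; by Theorem \ref{2.2} applied to the discrete system $(X,\phi)$, $\phi^q = q\text{-}\lim_{n \to \infty}\phi^n$ pointwise, so for every $x \in X$,
\[
\phi^q(f_1^p(x)) \;=\; q\text{-}\lim_{n \to \infty}\phi^n(f_1^p(x)) \;=\; q\text{-}\lim_{n \to \infty}f_1^{p+n}(x),
\]
where the second equality uses the integer case just proved. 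Lemma \ref{suma} rewrites the right-hand side as $f_1^{p+q}(x)$, giving $\phi^q \circ f_1^p = f_1^{p+q}$.

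For the ``in particular'' statement, since $p + q \in \N^*$ whenever $p \in \N^*$ and $q \in \beta(\N)$, the assignment $(\phi^q, f_1^p) \mapsto \phi^q \circ f_1^p = f_1^{p+q}$ takes values in $E(X,f_{1,\infty})^*$. Checking that it is a left action of $(E(X,\phi),\ast)$ reduces to the identity
\[
\phi^{q_1} \circ (\phi^{q_2} \circ f_1^p) \;=\; f_1^{(p+q_2)+q_1} \;=\; f_1^{p+(q_2+q_1)} \;=\; (\phi^{q_1} \ast \phi^{q_2}) \circ f_1^p,
\]
i.e.\ associativity of $+$ on $\beta(\N)$ together with $\phi^{q_1} \ast \phi^{q_2} = \phi^{q_2+q_1}$ on the discrete Ellis semigroup. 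I do not foresee a genuine obstacle; the only mild subtlety is to keep the order in $p+q$ consistent so that Lemma \ref{suma} applies verbatim and so that the action identity matches the definition of $\ast$.
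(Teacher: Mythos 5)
Your proposal is correct and follows essentially the same route as the paper: establish $\phi^n \circ f_1^p = f_1^{p+n}$ for $n \in \N$ from Theorem \ref{ca} (the paper asserts this in one line where you spell out the induction), then pass to general $q$ via $\phi^q = q\text{-}\lim_n \phi^n$ and Lemma \ref{suma}. Your explicit verification of the action axioms is a small addition the paper leaves implicit, but it is not a different argument.
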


\begin{proof} Fix $p \in \N^*$ and $q \in \beta(\N)$. In virtue of Theorem \ref{ca}, we know that $\phi \circ f_1^p = f_1^{p+1}$.  Hence,
we obtain that $\phi^n \circ f_1^p = f_1^{p+n}$ for each $n \in \N$. So, by Lemma \ref{suma}, we have that
$$
\phi^q \circ f_1^p(x) =  q -\lim_{n \to \infty}\phi^n(f_1^p(x)) = q -\lim_{n \to \infty}f_1^{p+n}(x)
$$
$$
= (p+q)-\lim_{n \to \infty}f_1^n(x) = f_1^{p+q}(x)
$$
for each $x \in X$.
\end{proof}

Now, we shall give an example of   a nonautonomous discrete dynamical system  $(X,f_{1,\infty})$ such that
$(f_n)_{n \in \N}$ converges uniformly to a function $\phi: X \to X$ and $\phi^p \circ f_1 \notin  E(X,f_{1,\infty})$  for any $p \in \beta(\N)$.
This  witnesses that the Ellis semigroup $E(X,\phi)$ does not necessarily act on the entire semigroup $E(X,f_{1,\infty})$.

\begin{example}
Let us consider the unit interval $[0,1]$. We define $\phi: [0,1] \to [0,1]$ by
$$
\phi(x)= \begin{cases} x  &  \ \ \text{if} \ \ x \in [0,\frac{1}{2}]  \\
\frac{1}{2}  &  \ \ \text{if} \ \ x \in (\frac{1}{2},1] ,
 \end{cases}
$$
 and
$$
f_1(x)= \begin{cases} \frac{2}{3}x  &  \ \ \text{if} \ \ x \in [0,\frac{1}{2}]  \\
\frac{1}{3}(x + \frac{1}{2})  &  \ \ \text{if} \ \ x \in (\frac{1}{2},1] .
 \end{cases}
$$
The  graph of the function  $f_1$ consists of two closed segments of $\mathbb{R}^2$ that connect the point $(0,0)$ with $(\frac{1}{2},\frac{1}{3})$ and
the point $(\frac{1}{2},\frac{1}{3})$ with $(1,\frac{1}{2})$.
For each $1 < n \in \N$ we define
$$
f_n(x)= \begin{cases} \frac{n}{n+1}x  &  \ \ \text{if} \ \ x \in [0,\frac{1}{n})  \\
(x - \frac{1}{2})(\frac{n(2-n-1)}{(n+1)(2-n)}) + \frac{1}{2}  &  \ \ \text{if} \ \ x \in [\frac{1}{n},\frac{1}{2}) \\
\frac{1}{2}  &  \ \ \text{if} \ \ x \in [\frac{1}{2},1].
 \end{cases}
$$
 In this case, the graph of the function  $f_n$ consists of three closed segments of $\mathbb{R}^2$ which are the one connecting the points $(0,0)$ and $(\frac{1}{n},\frac{1}{n+1})$, the one connecting the points   $(\frac{1}{n},\frac{1}{n+1})$ and $(\frac{1}{2},\frac{1}{2})$, and the one connecting the points $(\frac{1}{2},\frac{1}{2})$ and $(1,\frac{1}{2})$.
Clearly, the sequence $(f_n)_{n \in \N}$ converges uniformly to the function $\phi$.
Also, we have that $f_1(1) = \frac{1}{2}$,  $f_n(\frac{1}{n}) = \frac{1}{n+1} = f_1^n(1)$, for every $2 \leq n \in \N$, and
 $\phi^p = \phi$, for each $p \in \beta(\N)$. Thus,
 $\phi^p(\frac{1}{2}) = \frac{1}{2}$, for each  $p \in \beta(\N)$, and hence $\phi^p(f_1(1)) = \phi^p(1) = \frac{1}{2}$,  for each $p \in \beta(\N)$.
 As $f_1^q(1) = 0$  for each $q \in \N^*$, we obtain that  $\phi^p \circ f_1 \neq f_1^q$ for every $p, q \in \beta(\N)$.
\end{example}

Given  a nonautonomous discrete dynamical system  $(X,f_{1,\infty})$, for each $p \in \N^*$ we define
$$
E_p(X,f_{1,\infty}) := \{ f_1^{p+q} : q \in \beta(\N) \}.
$$
Notice that $E_p(X,f_{1,\infty})$ is a subsemigroup of $E(X,f_{1,\infty})$ for every $p \in  \N^*$.

\begin{theorem}\label{image} Let $(X,f_{1,\infty})$ be a nonautonomous discrete dynamical system that $(f_n)_{n \in \N}$ converges uniformly to a function $\phi: X \to X$.  Then $E_p(X,f_{1,\infty})$ is a continuous image of $E(X,\phi)$ for each $p \in \N^*$. As a consequence, $E_p(X,f_{1,\infty})$  is a quotient space of  the space $E(X,\phi)$ for every $p \in \N^*$.
\end{theorem}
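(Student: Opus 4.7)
The plan is to construct an explicit continuous surjection $\Phi\colon E(X,\phi)\to E_p(X,f_{1,\infty})$ by right composition with the fixed element $f_1^p$. More precisely, define
$$
\Phi(g) := g\circ f_1^p \quad \text{for each } g\in E(X,\phi).
$$
By Theorem \ref{accion}, for every $q\in\beta(\N)$ we have $\phi^q\circ f_1^p = f_1^{p+q}$, so $\Phi(\phi^q)=f_1^{p+q}\in E_p(X,f_{1,\infty})$, and $\Phi$ does indeed land in $E_p(X,f_{1,\infty})$.

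Surjectivity is then immediate from the very definition of $E_p(X,f_{1,\infty})$: any of its elements is of the form $f_1^{p+q}$ with $q\in\beta(\N)$, and Theorem \ref{accion} identifies this with $\Phi(\phi^q)$. Since $E(X,\phi) = \{\phi^q : q\in\beta(\N)\}$ by Theorem \ref{2.2} applied to the autonomous system $(X,\phi)$, the map $\Phi$ is onto.

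For continuity, I would verify the (essentially well known) fact that right composition by a fixed function is pointwise continuous on $X^X$: if a net $(g_\alpha)$ converges to $g$ in the pointwise topology, then for every $x\in X$, evaluating at the point $f_1^p(x)\in X$ gives $g_\alpha(f_1^p(x))\to g(f_1^p(x))$, which is precisely $(g_\alpha\circ f_1^p)(x)\to(g\circ f_1^p)(x)$. Restricting to the compact subspace $E(X,\phi)\subseteq X^X$ keeps this continuity.

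For the final assertion, I would use the standard topological fact that any continuous surjection from a compact space onto a Hausdorff space is a closed map and hence a quotient map. The space $E(X,\phi)$ is compact as a closed subset of $X^X$, and $E_p(X,f_{1,\infty})\subseteq X^X$ is Hausdorff, so $\Phi$ is a quotient map, realizing $E_p(X,f_{1,\infty})$ as a quotient of $E(X,\phi)$. The whole argument is essentially bookkeeping on top of Theorem \ref{accion}, so I do not expect any serious obstacle; the only point one must be a little careful about is confirming that right composition (rather than left composition) is what is continuous in the pointwise topology, which is exactly what we need here.
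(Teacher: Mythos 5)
Your proposal is correct and is essentially the paper's own argument: the paper defines the same map $\Psi_p(\phi^q)=f_1^{p+q}$ and verifies continuity by checking preimages of subbasic pointwise-open sets, which is exactly your observation that right composition with the fixed function $f_1^p$ is pointwise continuous. The only (cosmetic) improvement in your version is that defining the map directly as $g\mapsto g\circ f_1^p$ makes well-definedness automatic, whereas the paper must separately check that $\phi^s=\phi^t$ implies $f_1^{p+s}=f_1^{p+t}$ via Theorem \ref{accion}.
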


\begin{proof} Fix $p \in \N^*$. Define $\Psi_p: E(X,\phi) \to E(X,f_{1,\infty})$ by $\Psi_p(\phi^q) = f_1^{p + q}$ for each $q \in \beta(\N)$.
This function $\Psi_p$ is well defined. Indeed,   if $\phi^s = \phi^t$ for some $s, t \in \beta(\N)$,  by Theorem \ref{accion}, then
 $\phi^s \circ f_1^p = f_1^{p + s} = f_1^{p + t} = \phi^t \circ f_1^p$. Let us show that $\Psi_p$ is continuous. Consider the the open set $V = \bigcap_{i \leq l}[x_i,V_i]$ where $x_i \in X$ and $V_i \subseteq X$ is a nonempty open set for each $i \leq l$.   If $\phi^q \in \bigcap_{i \leq l}[f^p(x_i),V_i]$ for some $q \in \beta(\N)$, then
 $\phi^q(f_1^p(x_i)) = f_1^{p + q}(x_i) \in V_i$ for all $i \leq l$. This shows that $\Psi_p$ is continuous.
\end{proof}

The next theorem provides  a very interesting information concerning fixed points of the function $\phi$ when the sequence $(f_n)_{n \in \N}$ converges uniformly to  $\phi$.

\begin{theorem} Let $(X,f_{1,\infty})$ be a nonautonomous discrete dynamical system such that  $(f_n)_{n \in \N}$ converges uniformly to a function $\phi: X \to X$. Then,  the function $\phi: X \to X$ has a fixed point in
$\omega(x,f_{1,\infty})$ for every $x \in X$.
\end{theorem}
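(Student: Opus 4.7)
My plan is to combine Theorem \ref{accion} with the existence of idempotent ultrafilters in $(\beta(\N),+)$. Since the paper has established that $(\beta(\N),+)$ is a compact left topological semigroup and $\N^*$ is a closed two-sided ideal, the Ellis--Numakura lemma yields an idempotent $p\in\N^*$, that is, $p+p=p$.

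Fixing such a $p$, I would set $y:=f_1^p(x)$. By the $p$-limit description of the $\omega$-limit set recalled just before the theorem, $y\in\omega(x,f_{1,\infty})$. Applying Theorem \ref{accion} with $q=p$ and using $p+p=p$ one obtains
\[
\phi^p(y)\;=\;\phi^p\circ f_1^p(x)\;=\;f_1^{p+p}(x)\;=\;f_1^p(x)\;=\;y,
\]
so $y$ is a fixed point of the enveloping-semigroup element $\phi^p$; equivalently, $p\text{-}\lim_{n\to\infty}\phi^n(y)=y$, which already says that $y$ is a recurrent point of $\phi$.

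The main obstacle is then to upgrade ``$\phi^p(y)=y$'' to the genuine identity ``$\phi(y)=y$''. By Theorem \ref{ca}, $\phi(y)=\phi\circ f_1^p(x)=f_1^{p+1}(x)$, so the task is to verify the pointwise equality $f_1^{p+1}(x)=f_1^p(x)$. This does not follow formally from $p+p=p$, because the shift $q\mapsto q+1$ has no free fixed points in $\N^*$. I would attempt to overcome this by choosing $p$ to be a \emph{minimal} idempotent of $(\N^*,+)$, so that the orbit closure $\overline{\{\phi^n(y):n\in\N\}}\subseteq\omega(x,f_{1,\infty})$ becomes a minimal $\phi$-invariant subsystem in which $y$ is uniformly recurrent. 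One then hopes to exploit the uniform convergence $f_n\to\phi$, together with the description $\phi^q(y)=f_1^{p+q}(x)$ of the orbit of $y$ furnished by Theorem \ref{accion}, to force this minimal subsystem to collapse to a singleton, thereby yielding the desired fixed point of $\phi$ inside $\omega(x,f_{1,\infty})$.
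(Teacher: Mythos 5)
Your first step is correct: the Ellis--Numakura lemma gives an idempotent $p=p+p$ in $\N^*$, and Theorem \ref{accion} then yields $\phi^p(y)=f_1^{p+p}(x)=f_1^p(x)=y$ for $y:=f_1^p(x)\in\omega(x,f_{1,\infty})$, so $y$ is a recurrent (with $p$ a minimal idempotent, even uniformly recurrent) point of $\phi$. But the step you flag as the ``main obstacle'' is not merely an obstacle you did not overcome; it cannot be overcome, because the statement is false as written. Take $X=S^1$, let $\phi$ be rotation by an irrational angle, and put $f_n=\phi$ for every $n$. Then $(f_n)_{n\in\N}$ converges uniformly to $\phi$, $\omega(x,f_{1,\infty})=\omega(x,\phi)=S^1$ is a minimal $\phi$-system, and $\phi$ has no fixed point at all. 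In particular your hoped-for collapse of the minimal subsystem $\overline{\{\phi^n(y):n\in\N\}}$ to a singleton does not occur: minimality of that orbit closure is exactly what choosing a minimal idempotent buys you, and nothing in the hypothesis of uniform convergence $f_n\to\phi$ forces the minimal set to be a point. So your argument proves the correct weaker assertion that $\omega(x,f_{1,\infty})$ contains a (uniformly) recurrent point of $\phi$, and no more.

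For what it is worth, the paper's own argument breaks down at precisely the same place. It minimizes $g(p)=d(f_1^p(x),f_1^{p+1}(x))$ over the compact space $\N^*$, attains the minimum $\epsilon$ at some $q$, and, assuming $\epsilon>0$, uses the uniform asymptoticity of $(\phi\circ f_1^n)_{n\in\N}$ and $(f_1^{n+1})_{n\in\N}$ to pass to $q$-limits and conclude $d\bigl(\phi(f_1^q(x)),f_1^{q+1}(x)\bigr)\le\epsilon/2$. That inequality is vacuously true, since the two points coincide by Theorem \ref{ca}; the proof then silently replaces $\phi(f_1^q(x))$ by $f_1^q(x)$ to claim $d(f_1^q(x),f_1^{q+1}(x))\le\epsilon/2$, which is a non sequitur (it presupposes the fixed-point property being proved). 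In the rotation example $g$ is a strictly positive constant and no contradiction arises. The theorem would become true under an additional hypothesis guaranteeing $\inf_{y\in\omega(x,f_{1,\infty})}d(y,\phi(y))=0$, but neither your proposal nor the paper supplies one.
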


\begin{proof} Fix $x \in X$ and consider the set $\{ d(f_1^p(x),f_1^{p+1}(x)) : p \in \N^* \}$. Since the function $p \mapsto (f_1^p(x),f_1^{p+1}(x)): \beta(\N) \to X \times X$ is continuous\footnote{It is the Stone extension of the continuous function $n \mapsto (f_1^n(x),f_1^{n+1}(x)): \N \to X \times X$.} and $\N^*$ is compact, then there is $q \in \N^*$ such that
$$
d(f_1^q(x),f_1^{q+1}(x))  = \inf\{ d(f_1^p(x),f_1^{p+1}(x)) : p \in \N^* \}.
$$
If $d(f_1^q(x),f_1^{q+1}(x)) = 0$, then it follows from Theorem \ref{accion} that  $\phi (f_1^q(x)) = f_1^{q + 1}(x) = f_1^q(x)$ and hence $f_1^q(x)$ is a fixed point of $\phi$. Suppose that $0 < \epsilon = d(f_1^q(x),f_1^{q+1}(x))$. Choose $k \in \N$ so that
$$
d(\phi(f_1^n(x)),f_1^{n+1}(x)) = d(\phi(f_1^n(x)),f_{n+1}(f_1^n(x))) < \frac{\epsilon}{2},
$$
for every $n \in \N$ with $n > k$.  On the other hand, we know that $\phi(f_1^q(x)) = q-\lim_{n \to \infty}\phi(f_1^{n}(x))$ and
$$
f_1^{q+1}(x) = (q+1)-\lim_{n \to \infty}f_1^{n}(x) =  q -\lim_{n \to \infty}f_1^{n+1}(x).
$$
Hence, we obtain that  $d(f_1^q(x),f_1^{q+1}(x)) \leq  \frac{\epsilon}{2}$ which is a contradiction.
\end{proof}

The following two corollaries are direct consequence of Theorem \ref{accion}.

\begin{corollary} Let $(X,f_{1,\infty})$ be a nonautonomous discrete dynamical system such that  $(f_n)_{n \in \N}$ converges uniformly to a function $\phi: X \to X$. If $p \in \N^*$ is an idempotent\footnote{That is, $p+p = p$.}, then $\phi^p \circ f_1^{q+p} = f_1^{q+ p+p} = f_1^{q+p}$ for all $q \in \N^*$. That is,
$f_1^{q+p}(x)$ is a fixed point of $\phi^p$ for every $q \in \N^*$ and for every $x \in X$.
\end{corollary}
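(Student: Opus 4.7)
The plan is to obtain this corollary as a direct consequence of Theorem \ref{accion} combined with the associativity of $+$ on $\beta(\N)$ and the idempotent hypothesis on $p$. The heart of the matter is that the cited theorem already gives the action identity $\phi^{q'}\circ f_1^{p'} = f_1^{p'+q'}$ for every $p' \in \N^*$ and every $q' \in \beta(\N)$; we just need to feed in the right arguments.

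First I would fix $p \in \N^*$ idempotent, $q \in \N^*$, and observe that $q+p \in \N^*$. This uses the standard fact that $\N^*$ is a (two-sided) ideal of $(\beta(\N),+)$, so the composite ultrafilter $q+p$ is again free. Once that is noted, Theorem \ref{accion} may be applied with the roles ``$p$'' $\leftrightarrow q+p$ (which lies in $\N^*$) and ``$q$'' $\leftrightarrow p$ (which lies in $\beta(\N)$) to yield
\[
\phi^p \circ f_1^{q+p} \;=\; f_1^{(q+p)+p}.
\]

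Next, by associativity of $+$ on $\beta(\N)$, we have $(q+p)+p = q+(p+p)$, and because $p$ is idempotent, $p+p=p$, so $(q+p)+p = q+p$. Substituting back gives
\[
\phi^p \circ f_1^{q+p} \;=\; f_1^{q+p+p} \;=\; f_1^{q+p},
\]
which is precisely the claimed chain of equalities. Evaluating at any $x \in X$ then says that $f_1^{q+p}(x)$ is fixed by $\phi^p$, completing the argument.

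There is no real obstacle here; the proof is essentially a bookkeeping exercise using that Theorem \ref{accion} turns ``action by $\phi^{q'}$'' into ``shifting the ultrafilter exponent by $q'$'' and that idempotents absorb themselves under $+$. The only point requiring a quick remark is the ideal property $q+p \in \N^*$, needed to legitimately invoke Theorem \ref{accion} whose first argument must lie in $\N^*$.
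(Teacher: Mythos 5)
Your proof is correct and is exactly the argument the paper intends: the paper gives no separate proof, stating only that the corollary is a direct consequence of Theorem \ref{accion}, and your instantiation of that theorem with first argument $q+p\in\N^*$ and second argument $p$, followed by associativity and $p+p=p$, is the natural way to fill that in. Your remark that $q+p\in\N^*$ because $\N^*$ is an ideal of $(\beta(\N),+)$ is a worthwhile detail the paper leaves implicit.
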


The next results should be compared with Proposition 2.1 from \cite{ca11}.

\begin{corollary} Let $(X,f_{1,\infty})$ be a nonautonomous discrete dynamical system such that  $(f_n)_{n \in \N}$ converges uniformly to a function $\phi: X \to X$, let  $p \in \N^*$ and let $x \in X$. Then, $f_1^p(x)$ is a periodic point of $\phi$ iff there is $n \in \N$ such that
$f_1^p(x) = f_1^{p+ n}(x)$.
\end{corollary}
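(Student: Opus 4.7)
The plan is to derive the equivalence as a direct consequence of Theorem \ref{accion}, which is the essential ingredient. That theorem provides the identity $\phi^q \circ f_1^p = f_1^{p+q}$ for every $q \in \beta(\N)$, and in particular, specializing to $q = n \in \N$ and evaluating at the point $x$, one obtains $\phi^n(f_1^p(x)) = f_1^{p+n}(x)$. This converts each periodicity equation for $\phi$ at the point $y := f_1^p(x)$ into an identity between two iterates of $x$ under the nonautonomous system, and vice versa. The statement is to be read with the standard convention that periodicity requires a positive period $n \geq 1$; otherwise the condition $f_1^p(x) = f_1^{p+n}(x)$ is trivial.

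For the forward implication, I would suppose that $y = f_1^p(x)$ is a periodic point of $\phi$, so there exists $n \geq 1$ with $\phi^n(y) = y$. Substituting the identity from Theorem \ref{accion}, the left side becomes $f_1^{p+n}(x)$, and hence $f_1^{p+n}(x) = f_1^p(x)$, which is what the conclusion requires.

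For the converse, given $n \geq 1$ with $f_1^p(x) = f_1^{p+n}(x)$, I would again invoke Theorem \ref{accion} in the form $f_1^{p+n}(x) = \phi^n(f_1^p(x))$, which immediately rewrites the assumption as $\phi^n(f_1^p(x)) = f_1^p(x)$. Thus $f_1^p(x)$ is a periodic point of $\phi$ with period dividing $n$.

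There is no genuine obstacle here, since the whole content of the corollary is packaged inside Theorem \ref{accion}; the proof is essentially a one-line rewriting in both directions. The only thing to be alert to is the implicit positivity requirement on $n$, so that periodicity is understood in its usual nontrivial sense.
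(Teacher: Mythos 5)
Your proposal is correct and follows exactly the route the paper intends: the corollary is stated there as a direct consequence of Theorem \ref{accion} (specialized to $q=n\in\N$, giving $\phi^n\circ f_1^p=f_1^{p+n}$), and your two-line rewriting in each direction is precisely that argument. Your remark about requiring $n\geq 1$ for periodicity is a sensible clarification but does not change the substance.
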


Our last task is to  prove that the Ellis semigroup of two  topologically conjugate nonautonomous discrete dynamical systems are topologically isomorphic.
Let us remind the definition of topologically conjugate which was introduced in \cite{cl}.

\begin{definition}  Two  nonautonomous discrete dynamical systems $(X,f_{1,\infty})$ and $(Y,g_{1,\infty})$ are called {\it topologically semi-conjugate}
if there is a surjective continuous function $h: X \to Y$ such that $g_n(h(x)) = h(f_n(x))$ for every $x \in X$ and for every $n \in \N$. If the function $h$ is a homeomorphism, then we say that  they are {\it topologically conjugate}.
\end{definition}

\begin{theorem} Let $(X,f_{1,\infty})$ and $(Y,g_{1,\infty})$  be two nonautonomous discrete dynamical systems topologically semi-conjugate
via the surjective continuous function $h: X \to Y$. Then, $E(Y,g_{1,\infty})$ is a continuous image of $E(X,f_{1,\infty})$. If $h$ is a homeomorphism, then
$E(Y,g_{1,\infty})$ and $E(X,f_{1,\infty})$ are topologically isomorphic.
\end{theorem}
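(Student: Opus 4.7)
The plan is to construct a candidate map $\Phi : E(X,f_{1,\infty}) \to E(Y,g_{1,\infty})$ by sending $f_1^p$ to $g_1^p$ for each $p \in \beta(\N)$, and then verify that it is a well-defined, continuous, surjective semigroup homomorphism.

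First I would establish the key compatibility $h \circ f_1^p = g_1^p \circ h$ for every $p \in \beta(\N)$. For $n \in \N$ this is immediate from $g_n \circ h = h \circ f_n$ by a short induction on the length of the composition. For $p \in \N^*$ one invokes the fact that continuous functions preserve $p$-limit points: for each $x \in X$,
\[
h(f_1^p(x)) \;=\; h\bigl(p\text{-}\lim_{n\to\infty} f_1^n(x)\bigr) \;=\; p\text{-}\lim_{n\to\infty} h(f_1^n(x)) \;=\; p\text{-}\lim_{n\to\infty} g_1^n(h(x)) \;=\; g_1^p(h(x)).
\]
From this identity together with the surjectivity of $h$ one reads off well-definedness: if $f_1^p = f_1^q$ then $g_1^p \circ h = g_1^q \circ h$, and since $h$ is onto, $g_1^p = g_1^q$. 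Thus $\Phi$ is a well-defined function, and because the image already contains $\{g_1^n : n \in \N\}$, one immediately sees that the image lands in $E(Y,g_{1,\infty})$ and contains a dense subset of it.

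Next I would check continuity with respect to the pointwise topologies. Working with the subbasic opens $[y,V]$ of $Y^Y$, one observes that for $y = h(x)$ (using surjectivity again),
\[
\Phi^{-1}([y,V]) \cap E(X,f_{1,\infty}) \;=\; [x,h^{-1}(V)] \cap E(X,f_{1,\infty}),
\]
which is open in $E(X,f_{1,\infty})$. Continuity then combined with compactness of $E(X,f_{1,\infty})$ gives that $\Phi(E(X,f_{1,\infty}))$ is a closed subset of $E(Y,g_{1,\infty})$ containing the dense set $\{g_1^n : n \in \N\}$, so $\Phi$ is surjective. The homomorphism property is a one-line check: $\Phi(f_1^p \ast f_1^q) = \Phi(f_1^{q+p}) = g_1^{q+p} = g_1^p \ast g_1^q = \Phi(f_1^p) \ast \Phi(f_1^q)$.

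Finally, for the homeomorphism case, I would apply the same construction to $h^{-1}$ to obtain a map $\Psi : E(Y,g_{1,\infty}) \to E(X,f_{1,\infty})$ sending $g_1^p$ to $f_1^p$; by the same argument $\Psi$ is a continuous semigroup homomorphism, and $\Phi \circ \Psi$, $\Psi \circ \Phi$ are the identities on generators and hence everywhere. Thus $\Phi$ is a topological isomorphism. The only subtle point in the whole argument is the well-definedness step, which is where the surjectivity of $h$ genuinely enters; everything else is routine once the identity $h \circ f_1^p = g_1^p \circ h$ is in hand.
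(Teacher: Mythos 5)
Your proposal is correct and follows essentially the same route as the paper: the same map $f_1^p \mapsto g_1^p$, the same well-definedness argument via the identity $h \circ f_1^p = g_1^p \circ h$ together with surjectivity of $h$, and the same continuity check by pulling back subbasic opens $[y,V]$ to $[x,h^{-1}(V)]$ with $h(x)=y$. The only difference is that you spell out a few steps the paper leaves as ``clear'' (surjectivity of the induced map, the homomorphism identity, and the explicit inverse in the homeomorphism case), all of which are fine.
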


\begin{proof} Sine $g_n \circ h = h \circ f_n$ for every $n \in \N$, we must have that
$g_1^n \circ h = h \circ f_1^n$ for every $n \in \N$. Hence, we obtain that
 $g_1^p \circ h = h \circ f_1^p$ for every $p \in \beta(\N)$.
We shall prove that the  function $H: E(X,f_{1,\infty}) \to E(Y,g_{1,\infty})$ defined by $H(f_1^p) = g_1^p$, for each $p \in \beta(\N)$, is  well-defined and continuous. To prove that this function is well-defined assume that $f_1^p = f_1^q$ for some $p, q \in \beta(\N)$. Then, $g_1^p \circ h = h \circ f_1^p = h \circ f_1^q = g_1^q \circ h$ and since $h$ is onto, we must have that $g_1^p = g_1^q$. Now, we shall show that the function $H$ is continuous. Consider the basic open subset $V = \bigcap_{i \leq l}[y_i,V_i]$ where $y_i \in Y$ and $V_i$ is a nonempty open subset of $Y$, for every $i \leq l$. For each $i \leq l$, set $U_i = h^{-1}(V_i)$ and choose $x_i \in X$ so that $h(x_i) = y_i$. Put $U = \bigcap_{i \leq l}[x_i,U_i]$ and suppose that $f_1^p \in U$ for some $p \in \beta(\N)$. Then, $f_1^p(x_i) \in U_i$ and so $h(f_1^p(x_i)) = g_1^p(h(x_i)) = g_1^p(y_i) \in V_i$, for each $i \leq l$. That is, $H(f_1^p) = g_1^p \in V$. This shows the continuity of the function $H$. It is clear that if $h$ is a homeomorphism, then
$H$ is a topological isomorphism.
\end{proof}

 Let $(X,f_{1,\infty}$ nonautonomous discrete dynamical system, where  $f_{1,\infty} = (f_n: X \to X)_{n \in \N}$.
The orbit $\mathcal{O}_{f_{1,\infty}}(x) := \{x, f_1^1(x), f_1^2(x),....., f_1^n(x),.....\}$ of a point $x \in X$
can be  also described by the difference equation:
$x_1 = x$  and $x_{n+1} = f_n(x_n) = f_1^n(x)$ for each positive  $n \in \N$.
We may generalize the notion of nonautonomous difference equation  as follows:

For $x  \in X$ and $p \in \beta(\N)$, we define $x_1 := x$ and  $x_{p} := p-\lim_{n \to \infty}f_{n-1}(x_{n-1}) =  p-\lim_{n \to \infty}x_{n}$.

 \begin{theorem}\label{eq} Let $(X,f_{1,\infty})$ be a nonautonomous discrete dynamical system and $x \in X$.  Consider the nonautonomous  difference equation
$ x_1 = x,$ and $x_{n+1} = f_n(x_n)$ for each $n \in \mathbb{N}$. Then   we have that
 $$
 x_p   = f_1^{p-1}(x),
 $$
 for all $p \in \beta(\N)$. Besides, we have that $cl_X(\{ x_n : n \in \N\}) = \{ x_p : p \in \beta(\N)\}$.
\end{theorem}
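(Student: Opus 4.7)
The plan is to first identify the terms of the sequence $(x_n)$ with iterates of the dynamical system, then use Lemma \ref{fun} to convert the $p$-limit defining $x_p$ into the $(p-1)$-limit that defines $f_1^{p-1}(x)$, and finally read off the closure identity from the standard characterization of accumulation points via ultrafilter limits in a compact space.

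First, I would verify by a one-line induction on $n \in \N$ that $x_n = f_1^{n-1}(x)$: the base case $x_1 = x = f_1^0(x)$ holds by convention, and the inductive step gives $x_{n+1} = f_n(x_n) = f_n(f_1^{n-1}(x)) = f_1^n(x)$. This already settles the formula $x_p = f_1^{p-1}(x)$ for finite $p$, and also rewrites the sequence as $(f_1^{n-1}(x))_{n \in \N}$.

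Next, fix $p \in \N^*$ and let $p-1 \in \N^*$ be the unique ultrafilter with $\overline{\lambda_1}(p-1) = p$, as recorded in the preliminaries. Applying Lemma \ref{fun} with $\sigma = \lambda_1$ and $q = p-1$ to the sequence $(x_n)_{n \in \N}$, I get
$$
x_p \;=\; p-\lim_{n \to \infty} x_n \;=\; (p-1)-\lim_{l \to \infty} x_{\lambda_1(l)} \;=\; (p-1)-\lim_{l \to \infty} x_{l+1}.
$$
Since $x_{l+1} = f_1^l(x)$ by the first step, the right-hand side equals $(p-1)-\lim_{l \to \infty} f_1^l(x) = f_1^{p-1}(x)$ by Definition \ref{2.1}. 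This establishes $x_p = f_1^{p-1}(x)$ for every $p \in \beta(\N)$.

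For the closure equality, I would invoke the fact recalled in the preliminaries that, in a compact Hausdorff space, the accumulation points of the countable set $\{x_n : n \in \N\}$ are exactly the $p$-limits of $(x_n)$ for $p \in \N^*$, while each $x_n$ itself is the limit with respect to the principal ultrafilter at $n$. Hence
$$
cl_X(\{x_n : n \in \N\}) \;=\; \{\,p-\lim_{n \to \infty} x_n : p \in \beta(\N)\,\} \;=\; \{\,x_p : p \in \beta(\N)\,\},
$$
the last equality being the very definition of $x_p$ (extended to principal ultrafilters as noted above). The only delicate point, and the one I would double-check, is matching the shift correctly in Lemma \ref{fun}: the definition $x_p = p-\lim_n x_n$ is indexed by $n$ while the iterate $f_1^{p-1}$ is indexed by the exponent, and the index translation $n \mapsto n+1$ is precisely what forces the appearance of $p-1$ rather than $p$.
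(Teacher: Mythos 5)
Your proof is correct and takes essentially the same route as the paper: both identify $x_{n+1}=f_1^{n}(x)$, apply Lemma \ref{fun} with the shift $\lambda_1$ to trade the $p$-limit of $(x_n)$ for the $(p-1)$-limit of $(f_1^{n}(x))$, and deduce the closure identity from the standard ultrafilter description of closures in a compact space. The only cosmetic difference is that you justify the closure step via the accumulation-point characterization of $p$-limits, while the paper invokes the continuous (Stone) extension of $n\mapsto x_n$ to $\beta(\N)$; these are the same fact.
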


 \begin{proof} According to Lemma \ref{fun}, we have that
 $$
 x_p = p-\lim_{n \to \infty}f_{n-1}(x_{n-1}) = p-\lim_{n \to \infty}x_{n} = (p-1)-\lim_{n \to \infty}x_{n+1} = f_1^{p-1}(x).
 $$
 The function $n \mapsto x_n : \N \to X$ extends continuously to the function $p \mapsto x_p : \beta(\N) \to X$ and hence we obtain that
 $cl_X(\{ x_n : n \in \N\}) = \{ x_p : p \in \beta(\N)\}$.
 \end{proof}

Assume that $(X,f_{1,\infty})$ is a nonautonomous discrete dynamical system such that  $(f_n)_{n \in \N}$ converges uniformly to a function $\phi: X \to X$.
According to Theorem \ref{ca}, we have that $\phi(x_{p+n}) = x_{p+n+1}$ for each $p \in \N^*$, $n \in \N$ and $x \in $X.

\bibliographystyle{amsplain}

\end{document}